\documentclass[12pt, a4paper, abstracton]{scrartcl}
\pdfoutput=1

\usepackage{amsmath, amsthm, amsfonts, amssymb}
\usepackage[utf8]{inputenc}
\usepackage[nottoc]{tocbibind} % Bibliography im toc
\usepackage{color}
\usepackage{slashed} % Für die durchgestrichenen Dirac-Operator-Zeichen.
\usepackage[all, cmtip]{xy}
%\usepackage{pdflscape}
%\definecolor{mygray}{gray}{.5}
%\usepackage{paralist} % Für die compactenum- und inparaenum-Umgebungen
%\usepackage{enumerate} % For changing the enumerator
\usepackage{hyperref} % Anklickbare Links und Inhaltsverzeichnis. Und bookmarks im pdf.
\usepackage{chngcntr} % Um die Fußnotennummerierung fortlaufend im ganzen Dokument machen zu können.
\usepackage{graphicx} % Damit du größere \cdot definieren kannst. Und zum Bilder einbinden.
\usepackage[english]{babel}
\usepackage{microtype}
\usepackage{bm} % For bold + italic in mathmode
\usepackage{xfrac} % Für diagonale Brüche mit dem \sfrac-Befehl.
\usepackage{mathtools} % \mathclap
\usepackage{mathabx}

\newcommand{\IN}{\mathbb{N}}

\newcommand{\IR}{\mathbb{R}}
\newcommand{\IC}{\mathbb{C}}

\newcommand{\calS}{\mathcal{S}}

\newcommand{\IB}{\mathfrak{B}}
\newcommand{\IK}{\mathfrak{K}}

\newcommand{\supp}{\operatorname{supp}}
\newcommand{\diam}{\operatorname{diam}}

\newcommand{\id}{\operatorname{id}}

\newcommand{\card}{\#}

\newcommand{\trace}{\operatorname{tr}}
\newcommand{\ch}{\operatorname{ch}}

\newcommand{\op}{\mathrm{op}}

\newcommand{\Cpoln}{C^\ast_{\mathrm{pol},n}}
\newcommand{\Cpol}{C^\ast_{\mathrm{pol}}}
\newcommand{\Bpol}{B_{\mathrm{pol}}}

\newcommand{\HCocont}{H \! C^\mathrm{cont}}

\newcommand{\HC}{H \! C}
\newcommand{\HP}{H \! P}

\newcommand{\Huf}{H^\mathrm{uf}}

\newcommand{\Hufpol}{H^\mathrm{pol}}

\newcommand{\HufG}{H^{\mathrm{uf},G}}

\newcommand{\HpolG}{H^{\mathrm{pol},G}}
\newcommand{\CpolG}{C^{\mathrm{pol},G}}
\newcommand{\CufG}{C^{\mathrm{uf},G}}

\newcommand{\uf}{\mathrm{uf}}

\newcommand{\Frechet}{Fr\'{e}chet }

\newcommand{\alg}{\mathrm{alg}}

\newcommand{\CAT}{\mathrm{CAT}}

\newcommand{\KH}{K \! H}

\newcommand{\Vol}{\operatorname{Vol}}
\newcommand{\FVol}{\operatorname{FVol}}
\newcommand{\Diam}{\operatorname{Diam}}
\newcommand{\FDiam}{\operatorname{FDiam}}

\newcommand{\asCone}{\operatorname{asCone}}
\newcommand{\omegalim}{\operatorname*{\omega-lim}}
\newcommand{\dist}{\operatorname{dist}}
\newcommand{\idrm}{\mathrm{id}}

\newcommand*{\largecdot}{\raisebox{-0.25ex}{\scalebox{1.2}{$\cdot$}}}

\newtheorem{thm}{Theorem}[section]
\newtheorem*{thm*}{Theorem}
\newtheorem*{mainthm*}{Main Theorem}
\newtheorem{cor}[thm]{Corollary}
\newtheorem*{cor*}{Corollary}
\newtheorem{lem}[thm]{Lemma}
\newtheorem{prop}[thm]{Proposition}

\newtheorem*{conj*}{Conjecture}
\newtheorem*{obs*}{Observation}

\theoremstyle{definition}
\newtheorem{rem}[thm]{Remark}
\newtheorem{attention}[thm]{Attention}
\newtheorem{example}[thm]{Example}

\newtheorem{defn}[thm]{Definition}

\numberwithin{equation}{section}

\counterwithout{footnote}{section}

\setcounter{secnumdepth}{3} % subsections ohne Nummer, aber ins toc

\makeatletter% Damit man Footnotes ohne Nummer machen kann.
\def\blfootnote{\gdef\@thefnmark{}\@footnotetext}
\makeatother

\begin{document}

\title{Banach strong Novikov conjecture for polynomially contractible groups}
\author{Alexander Engel%\thanks{Department of Mathematics, Texas A\&M University, College Station, TX 77843, USA\newline
%engel@math.tamu.edu}
}
\date{}
\maketitle

\vspace*{-3.0\baselineskip}
\begin{center}
\footnotesize{
\textit{
Fakult\"{a}t f\"{u}r Mathematik\\
Universit\"{a}t Regensburg\\
93040 Regensburg, GERMANY\\
\href{mailto:alexander.engel@mathematik.uni-regensburg.de}{alexander.engel@mathematik.uni-regensburg.de}
}}
\end{center}

%\vspace*{0.5\baselineskip}
\begin{abstract}
We prove the Banach strong Novikov conjecture for groups having polynomially bounded higher-order combinatorial functions. This includes all automatic groups.
\end{abstract}

\tableofcontents

\paragraph{Acknowledgements} The author was supported by the SFB 1085 ``Higher Invariants'' and the Research Fellowship EN 1163/1-1 ``Mapping Analysis to Homology'', both funded by the Deutsche Forschungsgemeinschaft~DFG.

The author is grateful to Cornelia Dru\c{t}u, Bogdan Nica, Tim Riley and Guoliang Yu for answering a lot of questions, and to Clara L\"{o}h for many helpful comments. He thanks S\"{u}leyman Ka\v{g}an Samurka\c{s} for pointing out a mistake in a first version of this article, and the anonymous referee for his or her remarks.

\section{Introduction}

Let $G$ be a discrete, countable group. In the first of the two main theorems we denote
\[d := \sup\{n\colon H_n(G;\IC) \not= 0\}\]
and assume $d < \infty$.
The other occuring notions will be explained afterwards.

\paragraph{Main Theorem I (finite-dimensional case)}
\emph{Let the group $G$ be of type $F_{d+1}$ and let it be polynomially contractible up to order $d$.}

\emph{Then the Banach strong Novikov conjecture holds for $G$ with exponent $p = \sfrac{d+2}{d+1}$, i.e., the analytic assembly map $RK_\ast(BG) \to K_\ast(B_r^p G)$ is rationally injective, where $B_r^p G$ denotes the closure of $\IC G \subset \IB(\ell^p G)$.}

\paragraph{Main Theorem II}

\emph{Let the group $G$ be of type $F_\infty$ and let it be polynomially contractible.}

\begin{enumerate}
\item \emph{Then the Bost assembly map $RK_\ast(BG) \to K_\ast(\ell^1 G)$ is rationally injective.}

\item \emph{If $G$ also has Property (RD), then the strong Novikov conjecture holds for $G$, i.e., the analytic assembly map $RK_\ast(BG) \to K_\ast(C_r^\ast G)$ is rationally injective.}
\end{enumerate}

\paragraph{Definitions}
\begin{itemize}
\item Recall that for a countable and discrete group $G$ its classifying space $BG$ is, up to homotopy equivalence, uniquely determined by requiring that $\pi_1(BG) \cong G$ and that the universal cover of $BG$ is contractible.

\item The group $G$ is called of type $F_N$ if it admits a CW-complex model for $BG$ consisting of finitely many cells in each dimension up to dimension $N$. This is equivalent to admitting a model which is a simplicial complex and consists of finitely many simplices of each dimension up to dimension $N$. A group is of type $F_\infty$, if it admits a model for $BG$ consisting of finitely many cells (resp., simplices) in each dimension.

\item Due to several competing definitions of higher-order combinatorial functions and since our proofs work with either choice, we will give two variants for the notion of polynomial contractibility (concrete definitions will be given in Section~\ref{secn2339992}):
\begin{enumerate}
\item We call a group of type $F_{N+1}$ polynomially contractible up to order $N$, if for every $n \le N$ its higher-order two-variable isoperimetric and isodiametric functions $\delta^{(n)}(-,-)$ and $\eta^{(n)}(-,-)$ in the sense of Riley \cite{riley} are bounded from above by polynomials in two variables.

A group of type $F_\infty$ is called polynomially contractible, if it is polynomially contractible up to order $N$ for all $N \in \IN$.

\item The variant of the definition of polynomial contractibility is to use higher-order Dehn functions $d^n(-)$ as defined by Ji--Ramsey \cite{ji_ramsey}. Note that they depend only on one variable.
\end{enumerate}

\item Let $G$ be a finitely generated group. Fixing a generating set for $G$, we denote by $|\! -\! |$ the word-length derived from it. We denote by $H^s G$ the complex Banach space of all functions $\phi$ on $G$ with $(1+|\! -\! |)^s \phi(-) \in \ell^2 G$, and we put on $H^\infty G := \bigcap_{s \in \IR} H^s G$ the obvious \Frechet topology. We say $G$ has Property~(RD), if we have a continuous inclusion $H^\infty G \hookrightarrow C_r^\ast G$, where the latter is the reduced group $C^\ast$-algebra.
\end{itemize}

\paragraph{The class of polynomially contractible groups} Let us summarize which other classes of groups the class of polynomially contractible groups encompasses.

We assume in the following diagram that all groups are finitely generated, and then it follows automatically for these groups that they will be of type $F_\infty$: for groups with contractible asymptotic cones this was proven by Riley \cite[Theorem D]{riley} and for the combable groups by Alonso \cite[Theorem 2]{alonso}.

In Section~\ref{secn2339992} we will give references, resp., (ideas of) proofs for many of the implication arrows in the following diagram. We will also discuss in Section~\ref{secn2339992} the definitions of most of the occuring notions.

The equivalence of polynomial contractibility to polynomially bounded cohomology will be discussed in the next paragraph. Note that by the notion ``polynomially bounded cohomology'' we mean here that we have polynomial bounds for all coefficient modules, and not only coefficients $\IC$ as in Connes--Moscovici \cite{connes_moscovici}.

\begin{equation}
\label{eq875562}
\xymatrix{
\textbf{pol.~contractible} & \text{pol.~bounded~cohomology} \ar@{<=>}[l]\\
\text{contractible~asymptotic~cones} \ar@{=>}[u] & \text{polynomially~combable} \ar@{=>}[ul]\\
& \text{quasi-geodesically~combable} \ar@{=>}[u] \ar@{=>}[ul]\\
\text{virtually nilpotent} \ar@{=>}[uu] & \text{automatic or } \CAT(0) \ar@{=>}[u]}
\end{equation}

To the knowledge of the author, up to now the Banach strong Novikov conjecture was not known for the class of automatic groups. This is a rich class of groups and contains, e.g., the following other classes of groups (we give in parentheses references for the proofs that the corresponding groups are indeed automatic).
\begin{itemize}
\item hyperbolic groups (see, e.g., \cite[Theorem 3.4.5]{epstein_et_al})
\item Coxeter groups (Brink--Howlett \cite{brink_howlett})
\item Artin groups of finite type (Charney \cite{charney})
\item systolic groups (Januszkiewicz--\'{S}wi{\c{a}}tkowski \cite[Theorem E]{janus_swia})
\item mapping class groups (Mosher \cite{mosher})
\item $\CAT(0)$ cube groups (Niblo--Reeves \cite[Theorem 5.3]{niblo_reeves} for torsion-free groups and {\'{S}}wi{\c{a}}tkowski \cite[Corollary 8.1]{swiatkowski} in the general case)
\end{itemize}

\paragraph{Relation to polynomially bounded cohomology} Both Meyer \cite{meyer} and Ogle \cite{ogle_pol} proved that if a group is polynomially combable, then it will have polynomially bounded cohomology. More generally, Ogle (in loc.~cit.) proves that a group has polynomially bounded cohomology if its higher-order Dehn functions are polynomially bounded.

Ogle uses a different definition for the higher-order Dehn functions than we do in this paper. Ji--Ramsey \cite{ji_ramsey} introduce the definition for higher-order Dehn functions that we use, and then they show that if all higher-order Dehn functions (in their definition) are polynomially bounded, then this also holds for Ogle's version of higher-order Dehn functions, and vice versa. And this in turn, as Ji--Ramsey show, is equivalent to Gersten's version of higher-order Dehn functions being polynomially bounded \cite{gersten}.

The main result of Ji--Ramsey is then that a group of type $F_\infty$ has polynomially bounded higher-order Dehn functions if and only if it has polynomially bounded cohomology for all coefficient modules.

So the equivalence in Diagram \eqref{eq875562} holds only for the version of polynomial contractibility that uses the higher-order Dehn functions of Ji--Ramsey. But morally different notions of higher-order combinatorial functions should be comparable. Therefore it is a natural question whether having polynomially bounded cohomology for all coefficients is equivalent to polynomial contractibility using Riley's higher-order functions.

\subsection{Related work}
\label{subsec8988}

Being one of the most important conjectures in geometry and topology of manifolds, the strong Novikov conjecture received of course a lot of attention in the last few decades. The Banach version of the strong Novikov conjecture did not get much attention at all, mainly because it has no known implication to the geometry and topology of manifolds. Let us relate in this section the results of the present paper to some earlier results.

\paragraph{Relation to the result of Connes--Moscovici} The strong Novikov conjecture was proven by Connes--Moscovici \cite{connes_moscovici} for groups having Property (RD) and polynomially bounded cohomology (for coefficients $\IC$). The main example of groups having these two properties are hyperbolic groups.

From the discussion in the previous section it follows that, very roughly\footnote{That is to say, ignoring the difference between having polynomially bounded cohomology for all coefficients or only for coefficients $\IC$.}, one might say that the main work of this paper is to lift from the result of Connes--Moscovici the assumption that the group must have Property (RD). But to still be able to deduce a strong Novikov type statement, we must in exchange pass from the case $p=2$ to the case $1<p<2$ on the right hand side of the assembly map. In the presence of Property~(RD) we recover Connes--Moscovici's original result.

The question arises for which groups the $K$-theories of $B^p_r G$ and of $C_r^\ast G$ are isomorphic. To the knowledge of the author there are currently no published results in the literature. For hyperbolic groups $G$ work-in-progress of Liao--Yu \cite{liao_yu} shows that the $K$-theory of $B^p_r G$ is independent of $1 \le p < \infty$. We will discuss this in greater detail in Remark~\ref{remui3489re}.

Referring to Chatterji's overview article \cite{chatterji} about Property (RD), the following groups from Diagram \eqref{eq875562} are known to enjoy it: virtually nilpotent, hyperbolic, Coxeter, $\CAT(0)$ cubical, and mapping class groups (Sapir \cite{sapir_RD} also wrote a nice overview article about it). To the knowledge of the author there seems to be no relation between having contractible asymptotic cones and having Property (RD).

\paragraph{Relation to Yu's work} Yu proved the coarse Baum--Connes conjecture for groups which coarsely embed into a Hilbert space \cite{yu_embedding_Hilbert_space}. This implies by the descent principle the strong Novikov conjecture for such groups, provided they admit a finite classifying space. Later Skandalis--Tu--Yu proved injectivity of the Baum--Connes assembly map with coefficients for all coarsely embeddable groups \cite{skandalis_tu_yu}.

From the classes of groups we discussed in \eqref{eq875562}, virtually nilpotent groups and many automatic groups are known to be coarsely embeddable. But for the class of automatic groups this is currently not known. Although for automatic groups one might conjecture that they should be coarsely embedabble, it seems unlikely that a more general class of groups, e.g., groups with contractible asymptotic cones, should enjoy this property.

\subsection{Related open questions and problems}

Let us collect some open questions and problems that the author thinks are important:
\begin{enumerate}
\item Are both versions of polynomial contractibility equivalent, i.e., can one prove that a group of type $F_\infty$ has polynomially bounded higher-order two-variable isoperimetric and isodiametric functions in the sense of Riley if and only if it has polynomially bounded higher-order Dehn functions in the sense of Ji--Ramsey?
\item Do there exist automatic groups without Property (RD)?
\item Do there exist Property (RD) groups which are not polynomially contractible?
\item Are automatic groups coarsely embeddable into a Hilbert space?
\item Can one construct quasi-geodesically combable groups which do not admit coarse embeddings into a Hilbert space?
\item For which groups is $K_\ast(B^p_r G)$ independent of $p\in (1,\infty)$?

A positive answer in the case of polynomially contractible groups would prove the strong Novikov conjecture for these groups by the results in this paper.
\end{enumerate}

\subsection{Idea of the proof of the main theorem}
\label{seck23345}

\paragraph{Outline of the general argument}

Since the Banach strong Novikov conjecture only asks about rational injectivity, the idea is to pass via Chern characters to homology theories (since the ordinary topological Chern character is rationally an isomorphism).

\begin{equation}
\label{eqnjk23d}
\xymatrix{
RK_\ast(BG) \ar[rr]^{\mu^\alg} \ar@/^2pc/[rrr]^{\text{analytic assembly map}} \ar[dd]_{\ch_n} & & K^\alg_\ast(\calS G) \ar[r] \ar[d]_{\trace \circ \ch_n^{\calS G}} & K_\ast(B_r^p G) \ar[d]\\
& & \HC_n(\IC G) \ar[r] \ar[d]_{\chi_n} & \HCocont_n(\Bpol^p G) \ar@{-->}[d]\\
H_{n}(BG) \ar[rr] & & \HufG_{n}(G) \ar[r] & \HpolG_{n}(G)}
\end{equation}

We will discuss the above diagram in more detail in Section~\ref{seco23324545}. The Chern character $\ch_n\colon RK_\ast(BG) \to H_{n}(BG)$ is rationally injective. So rational injectivity of the analytic assembly map (on classes that map to degree $n$ classes in homology) will follow from injectivity of the lower row. One of the main steps in the proof of the main theorem is therefore to show that for polynomially contractible groups this map is injective. We will even prove that in this case it is an isomorphism, see Corollary~\ref{corj9809uk23ds}.

Another step in the proof of the main theorem is to actually construct the diagram. Let us highlight here first the dense subalgebra $\Bpol^p G \subset B^p_r G$, which will be introduced in Section~\ref{sec90234}. Its main technical property is that it is closed under holomorphic functional calculus (see Propositions~\ref{prop239ds} and \ref{prop89034ew}) and therefore has the same $K$-theory as $B_r^p G$. The main analytical part of our proof is to derive good kernel estimates for operators from this new subalgebra, see Section~\ref{seckn234323} and Lemma~\ref{lem8943e}. For example,  we will prove that $\Bpol^p G$ is continuously included in the $\ell^p$-space of rapidly decreasing functions.

It is a crucial ingredient to show that the map $\chi_n$ is continuous and so extends to the dotted map in the diagram. The homology groups $\HpolG_{n}(G)$ are defined by completing the chain complex $\CufG_\bullet(G)$ under a family of norms (Definition~\ref{defnnorm}). Now on the one hand we want these norms to be as large as possible so that it is easier to show that the map $H_{n}(BG) \to \HpolG_{n}(G)$ is bounded from below (from which we will deduce that it is actually an isomorphism). But on the other hand, we want these norms on $\CufG_\bullet(G)$ to be as small as possible so that it is easier for us to show that the map $\chi_n$ is continuous.

Similar diagrams like \eqref{eqnjk23d}, resp., the corresponding underlying idea were already considered before. One can divide the left square into two by introducing the corresponding assembly map for periodic cyclic homology \cite[Remark~2.9]{EM_burghelea}. The resulting upper left square was investigated by Corti\~{n}as--Tartaglia \cite{ctartaglia}. Similar diagrams were also considered by Ji--Ramsey \cite[Page 38]{JR} and one can argue that the basic idea goes back to Connes--Moscovici \cite[Theorem 5.4]{connes_moscovici}. The idea to map assembly maps to homology was also already employed by Yu \cite{yu_algebraic_novikov}. The computation of the homology of group rings was carried out by Burghelea \cite{burghelea}, and the computations of homology groups in relation to isomorphism conjectures by L\"{u}ck--Reich \cite[Theorem 0.7]{lueck_reich}.

\section{Quasi-local completions of group rings}
\label{sec90234}

This section contains the analytical meat of our argument. We will first introduce and discuss the subalgebra $\Cpol G \subset C_r^\ast G$, and then derive in Section~\ref{seckn234323} important kernel estimates for operators from $\Cpol G$.

Let $G$ be any countable, discrete group and denote by $\IC G$ the complex group ring. Recall that the reduced group $C^\ast$-algebra $C_r^\ast G$ is the closure of $\IC G \subset \IB(\ell^2 G)$.

Equip $G$ with any proper, left-invariant metric. We will denote the resulting metric space also by $G$. Note that any other proper, left-invariant metric on $G$ results in a space which is coarsely equivalent to the previous one. In fact, the identity map will be in this case a coarse equivalence.

Recall that an operator $A \in \IB(\ell^2 G)$ has finite propagation if there exists an $R > 0$, such that $\supp Au \subset B_R(\supp u)$ for all $u \in \ell^2 G$. Here we denote by $B_R(-)$ the ball of radius $R$. It is immediate that having finite propagation does not depend on the concrete choice of proper, left-invariant metric on $G$.

\begin{defn}[Quasi-local operators, cf.~Roe {\cite[Section 5]{roe_index_1}}]\label{defnoi34efr}
We call an operator $A \in \IB(\ell^2 G)$ quasi-local if there is a function $\mu\colon \IR_{> 0} \to \IR_{\ge 0}$ with $\mu(R) \to 0$ for $R \to \infty$ and such that
\[\|Au\|_{G \setminus B_R(\supp u)} \le \mu(R) \cdot \|u\|\]
for all $u \in \ell^2 G$.

We call such a function $\mu$ a dominating function for $A$.
\qed
\end{defn}

Note that any finite propagation operator is quasi-local, and analogously to the finite propagation case being quasi-local does not depend on the choice of proper, left-invariant metric on $G$.

By the following lemma we conclude that every operator from $C_r^\ast G$ is quasi-local:

\begin{lem}[{\cite[Lemma~2.26]{engel_phd}}]\label{lemkjdsf0923}
Let $(A_i)_{i \in \IN}$ be a sequence of quasi-local operators converging in operator norm to an operator $A$. Then $A$ is also quasi-local.
\end{lem}

\begin{proof}
By repeating some of the operators in the sequence $A_i$ we can assume that the propagation of $A_i$ is at most $i$. Then the dominating function of $A$ can be bounded from above by $\mu_A(i) \le \|A-A_i\|_\op$ for $i \in \IN$, which goes to $0$ as $i \to \infty$ by assumption.
\end{proof}

\begin{attention}
It is tempting to think that quasi-local operators are approximable by finite propagation operators. But it is not known whether this is indeed always the case, cf.~the discussion in \cite[Section~6]{engel_indices_UPDO}.

To the knowledge of the author, there is only the result of Rabinovich--Roch--Silbermann \cite{RRS}, resp., of Lange--Rabinovich \cite{lange_rabinovich} that on $\IR^n$ every quasi-local operator is approximable by finite propagation operators.

The other (partial) result that the author knows is his own \cite[Corollary 2.33]{engel_rough} that on spaces of polynomial growth one can approximate operators with a super-polynomially fast decaying dominating function by finite propagation operators.
\qed
\end{attention}

\begin{defn}\label{defn92434}
Let $G$ be a finitely generated group and fix a word-metric on $G$ with respect to a choice of a finite generating set.

For every $n \in \IN$ we define a norm on $\IC G \subset \IB(\ell^2 G)$ by
\begin{equation}
\label{eqnjk23vd}
\|A\|_{\mu,n} := \inf\{D>0\colon \mu_A(R) \le D/R^n \ \forall R > 1\},
\end{equation}
where $\mu_A(R)$ denotes the smallest possible dominating function for $A \in \IC G$, i.e., for every $R > 1$ we have
\begin{equation}
\label{eq12345asdfg}
\mu_A(R) = \inf\{C>0\colon \|Au\|_{G \setminus B_R(\supp u)} \le C \cdot \|u\| \text{ for all } u \in \ell^2(G)\}.
\end{equation}

We let $\Cpol G$ be the closure of $\IC G$ under the family of norms $(\|-\|_\op, \|-\|_{\mu,n}, \|-^\ast\|_{\mu,n})$ for all $n \in \IN$.
\qed
\end{defn}

\begin{rem}\label{remji2323}
The reason why we restrict us in the above definition to finitely generated groups is because any other choice of a finite generating set will result in a word-metric which is quasi-isometric to the previously chosen one. This results in a direct comparison between the norms $\|-\|_{\mu,n}$ for different choices of metrics, and therefore the algebra $\Cpol G$ becomes an invariant of the group $G$ itself.

If $G$ is not finitely generated, different choices of proper, left-invariant metrics lead in general only to coarsely equivalent spaces. If such a coarse equivalence now distorts lengths of elements of $G$ super-polynomially, then it is not clear anymore if the algebra $\Cpol G$ is independent of the choice of metric.
\qed
\end{rem}

In order to understand why $\Cpol G$ is an algebra, i.e., why it is closed under composition, we need the following estimate of Roe \cite[Proposition~5.2]{roe_index_1}: if $\mu_A$ denotes a dominating function for the operator $A \in \IB(\ell^2 G)$ and $\mu_B$ one for $B \in \IB(\ell^2 G)$, then a dominating function for the composition $AB$ is given by
\begin{equation}
\label{eq2334566}
\mu_{AB}(R) \le \|A\|_\op \cdot 2\mu_B(R/2) + \mu_A(R/2) \big( \|B\|_\op + 2\mu_B(R/2) \big).
\end{equation}
Note that in the case of rapidly decreasing functions it is only known that they constitute a convolution algebra if the group has Property (RD), see Jolissaint \cite[Lemma~1.2.4]{jolissaint_2}.

By induction over \eqref{eq2334566} we can show the estimate
\begin{equation}
\label{eq8934rg5}
\mu_{A^{n+1}}(R) \le \sum_{k=1}^n 5^k \|A\|^n_\op \mu_A(R/2^k)
\end{equation}
for all $R > 1$ and every $n \in \IN$.

The following is the main technical result, namely that $\Cpol G$ is closed under holomorphic functional calculus as well as all matrix algebras over it (we call this property ``smoothness''). The arguments are analogous to the ones in \cite[Section~2.3]{engel_rough}, where the corresponding statement was shown for a certain version of $\Cpol G$ on non-compact Riemannian manifolds.

\begin{prop}\label{prop239ds}
$\Cpol G$ is a dense and smooth \Frechet $^\ast$-subalgebra\footnote{A \Frechet $^\ast$-algebra is an algebra with a topology turning it into a \Frechet space with jointly continuous multiplication and such that the $^\ast$-operation is continuous. Note that we do not require here that the semi-norms used to define the \Frechet topology are sub-multiplicative.} of $C_r^\ast G$.
\end{prop}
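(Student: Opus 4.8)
The plan is to establish the three required properties—density, the Fréchet $^\ast$-algebra structure, and smoothness—in that order, treating smoothness as the genuinely substantial part. Density is immediate from the definition: $\Cpol G$ is by construction the closure of $\IC G$ under the given family of semi-norms, so $\IC G$ is dense by fiat. That $\Cpol G$ is a Fréchet space follows because it is the completion of $\IC G$ under a countable family of semi-norms (the operator norm together with $\|-\|_{\mu,n}$ and $\|-^\ast\|_{\mu,n}$ for $n \in \IN$); one checks that this family is separating since it contains $\|-\|_\op$. The $^\ast$-operation is continuous because the family of semi-norms is symmetric under $A \mapsto A^\ast$: the seminorm $\|-^\ast\|_{\mu,n}$ controls $\|A^\ast\|_{\mu,n}$ and vice versa, and $\|A^\ast\|_\op = \|A\|_\op$.

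The first real step is joint continuity of multiplication, which rests on Roe's composition estimate~\eqref{eq2334566}. From that inequality I would deduce that if $\mu_A(R) \le D_A/R^n$ and $\mu_B(R) \le D_B/R^n$ for all $R>1$, then $\mu_{AB}(R)$ is bounded by a constant (depending on $n$) times $R^{-n}$, with the constant controlled by $\|A\|_\op$, $\|B\|_\op$, and the two polynomial-decay constants. Concretely, each term on the right of~\eqref{eq2334566} carries a factor $\mu_A(R/2)$ or $\mu_B(R/2)$, and substituting the bound $(R/2)^{-n} = 2^n R^{-n}$ shows $\|AB\|_{\mu,n}$ is dominated by an expression polynomial in the relevant semi-norms of $A$ and $B$. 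Doing the same for $(AB)^\ast = B^\ast A^\ast$ handles the adjoint semi-norms. This establishes that $\Cpol G$ is closed under multiplication and that the product is jointly continuous in the Fréchet topology, so $\Cpol G$ is a Fréchet $^\ast$-subalgebra of $C_r^\ast G$.

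The hard part is smoothness, i.e.\ showing that $\Cpol G$ (and each matrix algebra $\Mat_k(\Cpol G)$) is closed under holomorphic functional calculus inside $C_r^\ast G$. As flagged in the excerpt, the argument should parallel \cite[Section~2.3]{engel_rough}. The standard reduction is: it suffices to show that for $A \in \Cpol G$ with $\|A\|_\op < 1$, the inverse $(1-A)^{-1}$—which exists in $C_r^\ast G$ and equals $\sum_{k \ge 0} A^k$ in operator norm—actually lies in $\Cpol G$, with all semi-norms $\|-\|_{\mu,n}$ finite. This is where the iterated estimate~\eqref{eq8934rg5} is essential: it bounds $\mu_{A^{k}}(R)$ by a sum involving $\mu_A(R/2^j)$, and inserting the polynomial decay $\mu_A(R) \le D/R^n$ gives, for each fixed $n$, a bound on $\mu_{A^k}(R)$ that grows only polynomially (or at controlled exponential rate offset by $\|A\|_\op < 1$) in $k$. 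The main obstacle, and the point demanding care, is to verify that when these per-power bounds are summed over $k$, the operator-norm factor $\|A\|_\op^{\,k} < 1$ wins against the combinatorial growth coming from the $5^k$ and the $2^{jk}$ factors in~\eqref{eq8934rg5}, so that $\sum_k \mu_{A^k}(R)$ converges to a function still decaying like $R^{-n}$. Once the Neumann series is shown to converge in the Fréchet topology of $\Cpol G$, invertibility in a spectrally-invariant sense follows, a standard Cauchy-integral argument upgrades this to full holomorphic functional calculus, and the identical estimates applied entrywise (using that matrix multiplication mixes finitely many products) give the matrix-algebra version, completing smoothness.
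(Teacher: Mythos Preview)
Your outline tracks the paper's proof closely through the Fr\'echet $^\ast$-algebra part: density is immediate, the $^\ast$-operation is continuous by symmetry of the seminorm family, and joint continuity of multiplication comes from Roe's estimate~\eqref{eq2334566} exactly as you say.

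The gap is in the smoothness step. You reduce to showing that for $A \in \Cpol G$ with $\|A\|_\op < 1$ the Neumann series $\sum_k A^k$ converges in all the seminorms $\|-\|_{\mu,n}$, and you flag as ``the point demanding care'' that the factor $\|A\|_\op^{\,k}$ must beat the combinatorial growth $5^k \cdot 2^{kn}$ coming from~\eqref{eq8934rg5}. But it does not: substituting $\mu_A(R/2^j) \le D\,2^{jn}/R^n$ into~\eqref{eq8934rg5} gives a bound on $\mu_{A^{k+1}}(R)$ of order $\|A\|_\op^{\,k}\,(5\cdot 2^n)^k / R^n$, and summing this over $k$ requires $\|A\|_\op < 1/(5\cdot 2^n)$, not merely $\|A\|_\op < 1$. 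So the threshold $\varepsilon$ in the ``small-perturbation-of-identity'' criterion necessarily depends on $n$, and no single $\varepsilon$ works for all seminorms simultaneously by this route.

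The paper handles this by a device you do not mention: it introduces the intermediate algebras $\Cpoln G$ (closure of $\IC G$ under $\|-\|_\op$, $\|-\|_{\mu,n}$, $\|-^\ast\|_{\mu,n}$ for a \emph{fixed} $n$), notes that $\Cpol G = \bigcap_n \Cpoln G$, and shows each $\Cpoln G$ separately is inverse-closed using the $n$-dependent choice $\varepsilon < \tfrac12 \cdot \tfrac{1}{5\cdot 2^n}$. Since each $\Cpoln G$ is then closed under holomorphic functional calculus, so is their intersection. Without this layer-by-layer decomposition the Neumann-series argument does not close. (Once holomorphic closure of $\Cpol G$ is established, the matrix-algebra case is handled in the paper by citing Schweitzer's general result \cite[Corollary~2.3]{schweitzer}, rather than by the entrywise repetition you suggest; either works.)
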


\begin{proof}
The only non-trivial point in showing that $\Cpol G$ is a dense \Frechet $^\ast$-subalgebra of $C_r^\ast G$ is to show that multiplication is jointly continuous. But this follows from \eqref{eq2334566}.

We have to show that $\Cpol G$ is closed under holomorphic functional calculus. From Schweitzer \cite[Corollary~2.3]{schweitzer} it then follows that all matrix algebras over $\Cpol G$ are also closed under holomorphic functional calculus.

By \cite[Lemma~1.2]{schweitzer} it suffices to show that $\Cpol G$ is inverse closed, and for this it suffices by \cite[Lemma~3.38]{GBVF} to show the following: there exists an $\varepsilon > 0$ such that $A \in \Cpol G$ with $A \in B_\varepsilon(\idrm)$ implies that $A$ is invertible in $\Cpol G$. Note that $B_\varepsilon(\idrm) \subset C_r^\ast G$ denotes a ball whose radius $\varepsilon$ is measured in operator norm, and the goal is to show that $\|A^{-1}\|_{\mu,n} < \infty$ for every $n \in \IN$.\footnote{Note that we actually also have to show that $A^{-1}$ lies in the closure of $\IC G$ under the norms $\|-\|_{\mu,n}$. But this will follow from the last estimate that we give in this proof, since it will show that $A^{-1}$ is approximated by $\sum_{k=0}^N (\idrm-A)^k$ for $N \to \infty$ in the norms $\|-\|_{\mu,n}$, and these operators all lie in $\Cpol G$ for all $N \in \IN$.}

Note that we will not be able to show exactly the above. We will be able to show that $\|A^{-1}\|_{\mu,n} < \infty$ for every $n \in \IN$, but our choice of $\varepsilon$ will depend on $n$. But this is also ok for us, because it shows that $\Cpoln G$ is closed under holomorphic functional calculus, where $\Cpoln G$ is the closure of $\IC G$ under the norms $(\|-\|_\op, \|-\|_{\mu,n}, \|-^\ast\|_{\mu,n})$ for a fixed $n \in \IN$. This are also \Frechet $^\ast$-algebras, and our arguments will show that they are closed under holomorphic functional calculus. It follows that the algebra $\Cpol G = \bigcap_{n \in \IN} \Cpoln G$ is also closed under holomorphic functional calculus.

Let $A \in \Cpol G$ with $A \in B_\varepsilon(\idrm)$ be given (we will fix our choice of $\varepsilon > 0$ later). We write $A = \idrm - (\idrm - A)$ and note the estimate $\|\idrm-A\|_\op < \varepsilon$. Hence we can write the inverse as $A^{-1} = (\idrm - (\idrm-A))^{-1} = \sum_{n=0}^\infty (\idrm-A)^n$ provided $\varepsilon < 1$. We start our estimate with
\[\mu_{\sum_{n=0}^N (\idrm-A)^n}(R) \le \sum_{n=0}^N \mu_{(\idrm-A)^n}(R) \le \sum_{n=0}^N \sum_{k=1}^{n-1} 5^k \|\idrm-A\|^{n-1}_\op \mu_{\idrm-A}(R/2^k),\]
where we have used \eqref{eq8934rg5}. Note that on the left hand side we can let the sum start at $n=1$ since $(\idrm-A)^0=\idrm$ has no propagation. Furthermore, we use $\|\idrm-A\|_\op < \varepsilon$ and that, again since $\id$ has no propagation, $\mu_{\idrm-A}(-) = \mu_A(-)$. So we can go on and have
\begin{align*}
\mu_{\sum_{n=0}^N (\idrm-A)^n}(R) & \le \sum_{n=1}^N \sum_{k=1}^{n-1} 5^k \varepsilon^{n-1} \mu_A(R/2^k)\\
& = \sum_{k=1}^{N-1} \sum_{n=k+1}^N 5^k \varepsilon^{n-1} \mu_A(R/2^k)\\
& \le \sum_{k=1}^{N-1} 5^k \mu_A(R/2^k) \frac{\varepsilon^{k+1}}{1-\varepsilon}\\
& \le \sum_{k=1}^{N-1} \frac{(5 \varepsilon)^{k+1}}{1-\varepsilon} \mu_A(R/2^k),
\end{align*}
where in the second-to-last inequality we assume $\varepsilon \le 1/2$. Now we use that we have $\mu_A(R/2^k) \le \|A\|_{\mu,l} 2^{kl} / R^l$ for every $l \in \IN$. We fix now an $l \in \IN$ and we fix now $\varepsilon < \frac{1}{2}\cdot\frac{1}{5\cdot 2^l}$. Then we have
\begin{align*}
\mu_{\sum_{n=0}^N (\idrm-A)^n}(R) & \le \sum_{k=1}^{N-1} \frac{(5 \varepsilon)^{k+1}}{1-\varepsilon} \cdot \frac{\|A\|_{\mu,l} 2^{kl}}{R^l}\\
& \le \frac{\|A\|_{\mu,l}}{R^l (1-\varepsilon)} \sum_{k=1}^{N-1} \underbrace{(5 \varepsilon)^{k+1} \cdot 2^{kl}}_{\le (5 \varepsilon 2^l)^{k+1}}\\
& \le \frac{2(1-(\sfrac{1}{2})^{N+1})}{1-\varepsilon} \cdot \|A\|_{\mu,l} \cdot 1/R^l.
\end{align*}
Letting $N \to \infty$ we therefore conclude that $\|A^{-1}\|_{\mu,l} < \infty$, finishing this proof.
\end{proof}

\begin{rem}\label{remui24343t5}
Our above estimate shows that $\sum_{n=0}^N (\idrm-A)^n$ approximates $A^{-1}$ exponentially fast. Furthermore, if we assume that $A$ has finite propagation, we can conclude $\mu_{A^{-1}}(R) \le \sqrt[\operatorname{prop} A]{\varepsilon}\cdot \varepsilon^R$ for $\varepsilon < 1/2$ and $A \in B_\varepsilon(\idrm)$, showing that $A^{-1}$ has an exponentially decaying dominating function. To get this estimate, we use the idea from the proof of Lemma~\ref{lemkjdsf0923} combined with the fact that $\sum_{n=0}^N (\idrm - A)^n$ approximates $A^{-1}$ as $N \to \infty$ and that we have the estimate $\operatorname{prop} \Big( \sum_{n=0}^N (\idrm - A)^n \Big) \le N \cdot \operatorname{prop} (A)$.
\qed
\end{rem}

\subsection{Kernel estimates}
\label{seckn234323}

The following lemma introduces the basic idea how to derive kernel estimates from the norms \eqref{eqnjk23vd}.

\begin{lem}[cf.~{\cite[Proposition~5.4]{roe_index_1}}]
\label{lemjk231fsd}
Let $A = \sum_{g \in G} a_g g \in \Cpol G$.

Then for every $n \in \IN$ we have
\[\sum_{g \in G \setminus B_R(e)} |a_g|^2 < \|A\|_{\mu,n}^2 / R^{2n},\]
where $B_R(e)$ denotes the ball of radius $R > 1$ around $e \in G$.
\end{lem}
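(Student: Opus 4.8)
The plan is to test the quasi-locality estimate against the single most economical input, the delta function $\delta_e$ at the identity, and then read the coefficients $a_g$ directly off of $A\delta_e$. Recall that $G$ acts on $\ell^2 G$ through the left regular representation, so that $g \cdot \delta_e = \delta_g$; hence $A\delta_e = \sum_{g \in G} a_g \delta_g$, and this is a genuine element of $\ell^2 G$ because $A$ is a bounded operator and $\delta_e \in \ell^2 G$. In particular the coefficients are recovered as $a_g = (A\delta_e)(g)$, so that $\|A\delta_e\|^2 = \sum_{g} |a_g|^2$ and the truncation to $G \setminus B_R(e)$ has squared norm exactly $\sum_{g \in G \setminus B_R(e)} |a_g|^2$.

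The core step is then to apply Definition~\ref{defnoi34efr} with $u = \delta_e$. Since $\supp \delta_e = \{e\}$ we have $B_R(\supp u) = B_R(e)$, and $\|\delta_e\| = 1$, so the defining property of the smallest dominating function $\mu_A$ in \eqref{eq12345asdfg} yields
\[\Big( \sum_{g \in G \setminus B_R(e)} |a_g|^2 \Big)^{1/2} = \|A\delta_e\|_{G \setminus B_R(e)} \le \mu_A(R).\]
Squaring and then invoking the definition \eqref{eqnjk23vd} of $\|A\|_{\mu,n}$, which gives $\mu_A(R) \le \|A\|_{\mu,n} / R^n$ for every $R > 1$, I obtain
\[\sum_{g \in G \setminus B_R(e)} |a_g|^2 \le \mu_A(R)^2 \le \|A\|_{\mu,n}^2 / R^{2n}.\]

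To upgrade this weak inequality to the strict one asserted in the lemma, I would exploit that the word metric is integer-valued: for any $R > 1$ there is some $R'' > R$ with $B_{R''}(e) = B_R(e)$, because the ball $\{g \colon |g| \le R\}$ only enlarges as $R$ crosses an integer. Running the previous two displays at $R''$ in place of $R$ and using $G \setminus B_R(e) = G \setminus B_{R''}(e)$ gives $\sum_{g \in G \setminus B_R(e)} |a_g|^2 \le \|A\|_{\mu,n}^2 / (R'')^{2n} < \|A\|_{\mu,n}^2 / R^{2n}$, the final strict step being valid whenever $\|A\|_{\mu,n} > 0$ (the degenerate case $\|A\|_{\mu,n} = 0$ forces both sides to vanish for $R > 1$).

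I do not expect a serious obstacle here: the statement is essentially an unwinding of the definitions, mirroring Roe's Proposition~5.4. The only points requiring care are (i) that $A$ is in general an infinite sum, so the identity $A\delta_e = \sum_g a_g\delta_g$ must be read as an $\ell^2$-convergent expansion rather than a finite one, which is immediate from boundedness of $A$; and (ii) correctly matching the restriction norm $\|\cdot\|_{G \setminus B_R(\supp u)}$ to the tail sum over the coefficients, which is exactly what the choice $u = \delta_e$ arranges.
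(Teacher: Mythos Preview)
Your argument is correct and essentially identical to the paper's: both test the quasi-locality estimate on $\delta_e$, identify $(A\delta_e)(g)=a_g$, and then feed in $\mu_A(R)\le \|A\|_{\mu,n}/R^n$. Your extra paragraph upgrading $\le$ to $<$ via the integer-valuedness of the word metric actually goes beyond the paper, which simply writes the strict inequality without comment; as you observe, the strict form fails in the degenerate case $\|A\|_{\mu,n}=0$, so only the weak inequality is literally true in general.
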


\begin{proof}
Denote by $\delta_e \in \ell^2 G$ the function with value $1$ at $e \in G$. Since $A$ is a quasi-local operator we have $\|A \delta_e\|^2_{G \setminus B_R(e)} \le \mu_A(R)^2 \cdot \|\delta_e\|^2$ for a dominating function $\mu_A$ of $A$. But the expression $\|A \delta_e\|^2_{G \setminus B_R(e)}$ is, by definition, $\sum_{g \in G \setminus B_R(e)} |(A \delta_e)(g)|^2$ and $(A \delta_e)(g) = a_g$. Now we use that $\|\delta_e\| = 1$ and that for any given $n \in \IN$ we have $\mu_A(R) \le \|A\|_{\mu,n} / R^n$ for all $R > 1$.
\end{proof}

The next is a corollary to Lemma~\ref{lemjk231fsd} and introduces polynomial weights into the derived estimates. Note that the left hand sides of the following estimates are the ones appearing in the definition of the space of rapidly decreasing functions (see, e.g., Jolissaint \cite{jolissaint}), i.e., the following corollary proves that our algebra $\Cpol G$ is continuously contained in this space of rapidly decreasing functions.

\begin{cor}\label{cornkjs23}
Let $A = \sum_{g \in G} a_g g \in \Cpol G$. For every $n \in \IN$ we have
\[\sum_{g \in G} d(g,e)^{2n-2} \cdot |a_g|^2 < \|A\|^2_{\mu,n} \cdot \pi^2 / 6.\]
\end{cor}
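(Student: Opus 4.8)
The plan is to extract the weighted sum from the single radial estimate of Lemma~\ref{lemjk231fsd} by decomposing the coefficients into the concentric shells of the word metric and then reassembling them against the polynomial weight $d(g,e)^{2n-2}$. Since a word metric is integer-valued, I would first group the terms according to the integer $k = d(g,e)$, writing
\[\sum_{g\in G}d(g,e)^{2n-2}\,|a_g|^2 = \sum_{k=1}^\infty k^{2n-2}\sum_{\substack{g\in G\\ d(g,e)=k}}|a_g|^2,\]
where the $k=0$ shell contributes nothing. The whole point is then to feed each shell into Lemma~\ref{lemjk231fsd} with a radius matched to $k$ so that the exponents collapse, and to sum the result.

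The key step is the passage from the lemma's tail estimate to a bound on each shell with the \emph{sharp} power $k^{2n}$ in the denominator. Because the metric is integer-valued, for any $R\in(k-1,k)$ the set $G\setminus B_R(e)$ equals $\{g : d(g,e)\ge k\}$, so Lemma~\ref{lemjk231fsd} gives $\sum_{d(g,e)\ge k}|a_g|^2 < \|A\|_{\mu,n}^2/R^{2n}$ for every such $R$; letting $R\uparrow k$ yields
\[\sum_{\substack{g\in G\\ d(g,e)=k}}|a_g|^2 \;\le\; \sum_{\substack{g\in G\\ d(g,e)\ge k}}|a_g|^2 \;\le\; \frac{\|A\|_{\mu,n}^2}{k^{2n}}.\]
It is exactly this choice of radius, namely $k$ rather than the naive $k-1$, that makes the powers cancel: multiplying by the shell weight $k^{2n-2}$ leaves precisely $\|A\|_{\mu,n}^2/k^2$. (Using the lemma crudely at radius $k-1$ would instead leave a denominator $(k-1)^{2n}$ together with a spurious bounded-but-nontrivial factor, and would not reproduce the stated constant.)

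Summing the resulting bound over $k$ then gives
\[\sum_{g\in G}d(g,e)^{2n-2}\,|a_g|^2 \;\le\; \sum_{k=1}^\infty \frac{\|A\|_{\mu,n}^2}{k^2} \;=\; \|A\|_{\mu,n}^2\cdot\frac{\pi^2}{6},\]
the constant $\pi^2/6=\sum_{k\ge1}k^{-2}$ appearing precisely because the weighted shells assemble into a convergent $p$-series, with strictness inherited from that of Lemma~\ref{lemjk231fsd}. The one delicate point I expect is the innermost shell $k=1$: here the interval $(k-1,k)=(0,1)$ does not meet the admissible range $R>1$ of Lemma~\ref{lemjk231fsd}, so the estimate $\sum_{d(g,e)\ge1}|a_g|^2\le\|A\|_{\mu,n}^2$ has to be read off as the limiting ($R\to1$) case; this boundary term is really the only place where care is needed, since all higher shells are controlled uniformly and the tail of the series converges without incident.
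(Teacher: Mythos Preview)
Your proof is correct and takes essentially the same route as the paper: both sum the estimate of Lemma~\ref{lemjk231fsd} over integer radii to produce the constant $\pi^2/6=\sum_{k\ge1}k^{-2}$, the only cosmetic difference being that the paper sums the tail bound $R^{2n-2}\sum_{g\notin B_R(e)}|a_g|^2$ over $R\in\IN$ and then swaps the order of summation, whereas you group into shells $\{d(g,e)=k\}$ first and bound each shell by the corresponding tail. The boundary issue at $k=1$ that you flagged is handled identically (and just as informally) in the paper's version.
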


\begin{proof}
From the inequality in Lemma~\ref{lemjk231fsd} it follows that
\[\sum_{R \in \IN} \Big( R^{2n-2} \cdot \sum_{g \in G \setminus B_R(e)} |a_g|^2 \Big) < \sum_{R \in \IN} \|A\|^2_{\mu,n} / R^2 \le \|A\|^2_{\mu,n} \cdot \pi^2/6.\]
Now the left hand side of this estimate is equal to $\sum_{g \in G} \sum_{R=1}^{d(g,e)} R^{2n-2} \cdot |a_g|^2$ and we have
\[\sum_{g \in G} d(g,e)^{2n-2} \cdot |a_g|^2 \le \sum_{g \in G} \sum_{R=1}^{d(g,e)} R^{2n-2} \cdot |a_g|^2.\]
This shows the claimed inequality.
\end{proof}

\subsection{Banach space completions}

We let $B^p_r G$ denote the completion of $\IC G \subset \IB(\ell^p G)$ in operator norm for any $p \in [1,\infty]$.

Definition \ref{defnoi34efr} of quasi-local operators in $\IB(\ell^p G)$, Lemma \ref{lemkjdsf0923}, and Definition~\ref{defn92434} carry over and make sense in this setting. For convenience, let us write down the corresponding definition:

\begin{defn}
Let $G$ be a finitely generated group. For every $n \in \IN$ we define a norm on $\IC G \subset \IB(\ell^p G)$ by
\begin{equation*}
\label{eqnj7564k23vd}
\|A\|_{p,\mu,n} := \inf\{D>0\colon \mu^p_A(R) \le D/R^n \ \forall R > 1\},
\end{equation*}
where $\mu^p_A(R)$ denotes the smallest possible dominating function for $A \in \IC G \subset \IB(\ell^p G)$.

We let $\Bpol^p G$ be the closure of $\IC G$ under the family of norms $(\|-\|_{p,\op}, \|-\|_{p,\mu,n})$ for all $n \in \IN$, where $\|-\|_{p,\op}$ denotes the operator norm in $\IB(\ell^p G)$.
\qed
\end{defn}

Estimates~\eqref{eq2334566} and \eqref{eq8934rg5} are still good in this $\ell^p$-setting, and the proof of Proposition~\ref{prop239ds} also goes through without changes. So we have the following fact for every $p \in [1,\infty]$:

\begin{prop}\label{prop89034ew}
$\Bpol^p G$ is a dense and smooth \Frechet subalgebra of $B^p_r G$.
\end{prop}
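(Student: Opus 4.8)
The plan is to run the proof of Proposition~\ref{prop239ds} essentially verbatim, replacing $\ell^2 G$ by $\ell^p G$ throughout and simply discarding every reference to the $^\ast$-operation: for $p \neq 2$ there is no adjoint, which is precisely why the statement only asserts a \Frechet subalgebra (not a \Frechet $^\ast$-subalgebra) and why the defining family of norms no longer contains the terms $\|-^\ast\|_{p,\mu,n}$. First I would dispose of the claim that $\Bpol^p G$ is a dense \Frechet subalgebra of the Banach algebra $B^p_r G$: density and completeness under the countable family $(\|-\|_{p,\op}, \|-\|_{p,\mu,n})_{n}$ are immediate from the definition as a closure, so the only point requiring attention is joint continuity of multiplication, which follows from the $\ell^p$-version of Roe's composition estimate \eqref{eq2334566}.

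The first real task is therefore to confirm that \eqref{eq2334566}, and with it its iterate \eqref{eq8934rg5}, survive the passage to $\ell^p$. Here I would revisit Roe's argument: given $u \in \ell^p G$ and writing $v = Bu$, one splits $v$ into its restrictions to $B_{R/2}(\supp u)$ and to the complement, and bounds $\|ABu\|_{G \setminus B_R(\supp u)}$ using only (i) the triangle inequality, (ii) the operator-norm bounds for $A$ and $B$, and (iii) the fact that restricting a function to a subset does not increase its $\ell^p$-norm. All three ingredients hold verbatim for every $p \in [1,\infty]$, so \eqref{eq2334566} holds with the same constants and \eqref{eq8934rg5} follows by the same induction.

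The core of the proof is smoothness, i.e.\ closedness under holomorphic functional calculus together with all matrix algebras over $\Bpol^p G$. Following the proof of Proposition~\ref{prop239ds}: by Schweitzer \cite[Corollary~2.3]{schweitzer} it suffices to treat $\Bpol^p G$ itself, by \cite[Lemma~1.2]{schweitzer} it suffices to prove inverse closedness, and by \cite[Lemma~3.38]{GBVF} it suffices to find $\varepsilon > 0$ so that $A \in \Bpol^p G$ with $\|\idrm - A\|_{p,\op} < \varepsilon$ has its inverse in $\Bpol^p G$. I would stress that these three reductions are statements about dense \Frechet subalgebras of Banach algebras and make no use of a $^\ast$-structure, and that $B^p_r G$ is genuinely a Banach algebra (being the operator-norm closure of $\IC G$ in $\IB(\ell^p G)$), so the framework transfers untouched. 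The inverse is then produced by the Neumann series $A^{-1} = \sum_{n=0}^\infty (\idrm - A)^n$; feeding $\mu^p_A(R/2^k) \le \|A\|_{p,\mu,l}\,2^{kl}/R^l$ into \eqref{eq8934rg5} and taking $\varepsilon < \tfrac{1}{2}\cdot\tfrac{1}{5 \cdot 2^l}$ bounds the dominating functions of the partial sums and yields $\|A^{-1}\|_{p,\mu,l} < \infty$. Exactly as in Proposition~\ref{prop239ds}, the threshold $\varepsilon$ depends on the order $l$, so one argues fixed-order-by-fixed-order and then intersects; this causes no difficulty.

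I do not expect a genuine obstacle: the whole argument is Banach-space-theoretic and nowhere used the Hilbert structure of $\ell^2 G$ except through the adjoint, which is simply dropped. The point I would be most careful about is precisely this dropping — checking that the spectral-invariance machinery (Schweitzer's two lemmas and the reduction \cite[Lemma~3.38]{GBVF}) is formulated for \Frechet algebras rather than \Frechet $^\ast$-algebras, so that the absence of an adjoint for $p \neq 2$ is harmless — and, secondarily, confirming that the ``restriction is norm-nonincreasing'' step underlying \eqref{eq2334566} holds uniformly over the full range $p \in [1,\infty]$, including the endpoints.
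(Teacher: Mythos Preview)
Your proposal is correct and is exactly the paper's approach: the paper's entire ``proof'' of Proposition~\ref{prop89034ew} is the one-line remark that Estimates~\eqref{eq2334566} and~\eqref{eq8934rg5} remain valid in the $\ell^p$-setting and that the proof of Proposition~\ref{prop239ds} goes through without changes. You have simply spelled out the verifications (Roe's composition estimate, the Schweitzer/GBVF reductions, dropping the $^\ast$) that the paper leaves implicit.
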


The kernel estimates from Section~\ref{seckn234323} are also still good in the $\ell^p$-case:

\begin{lem}\label{lem8943e}
Let $A = \sum_{g \in G} a_g g \in \Bpol^p G$ for $p\in[1,\infty)$.

Then for every $n \in \IN$ we have
\[\sum_{g \in G \setminus B_R(e)} |a_g|^p < \|A\|_{p,\mu,n}^p / R^{pn}\]
and also
\[\sum_{g \in G} d(g,e)^{pn-2} \cdot |a_g|^p < \|A\|^p_{p,\mu,n} \cdot \pi^2 / 6.\]
\end{lem}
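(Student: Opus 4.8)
The plan is to repeat verbatim the arguments of Lemma~\ref{lemjk231fsd} and Corollary~\ref{cornkjs23}, replacing the Hilbert-space exponent $2$ by $p$ and $\ell^2 G$ by $\ell^p G$ throughout. The only structural input needed is that quasi-locality in $\IB(\ell^p G)$, together with the definition of the norm $\|-\|_{p,\mu,n}$, behaves exactly as in the $\ell^2$-case; everything else is bookkeeping of exponents.

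For the first estimate I would test the operator $A$ against $\delta_e \in \ell^p G$, the indicator of the identity. Since $A$ is quasi-local in $\IB(\ell^p G)$ with smallest dominating function $\mu^p_A$, Definition~\ref{defnoi34efr} (carried over to the $\ell^p$-setting) gives $\|A\delta_e\|_{G \setminus B_R(e)} \le \mu^p_A(R)\cdot\|\delta_e\|$. The key elementary observations are that $\|\delta_e\| = 1$ for every $p$, that $(A\delta_e)(g) = a_g$, and hence that
\[\|A\delta_e\|^p_{G \setminus B_R(e)} = \sum_{g \in G \setminus B_R(e)} |a_g|^p.\]
Raising the quasi-locality inequality to the $p$-th power and inserting the bound $\mu^p_A(R) \le \|A\|_{p,\mu,n}/R^n$, valid for all $R > 1$ by the definition of the norm, yields $\sum_{g \in G \setminus B_R(e)} |a_g|^p < \|A\|^p_{p,\mu,n}/R^{pn}$ exactly as in the $\ell^2$-case, which is the first claimed bound.

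For the second estimate I would weight the first bound by $R^{pn-2}$ and sum over $R \in \IN$. The point of this particular weight is that $R^{pn-2}/R^{pn} = 1/R^2$, so the right-hand side collapses into the convergent Basel series $\sum_R 1/R^2 = \pi^2/6$ times $\|A\|^p_{p,\mu,n}$. Interchanging the order of summation on the left, as in Corollary~\ref{cornkjs23}, rewrites it as $\sum_{g \in G}\sum_{R=1}^{d(g,e)} R^{pn-2}\,|a_g|^p$, and discarding all but the top term $R = d(g,e)$ of the inner sum gives the lower bound $\sum_{g \in G} d(g,e)^{pn-2}\,|a_g|^p$. Combining the two inequalities produces the second claimed bound.

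The argument has essentially no obstacle: the only point worth checking carefully is the exponent bookkeeping, namely that passing from the $\ell^2$-norm to the $\ell^p$-norm turns the square exponents $2$ into $p$ uniformly (so that $R^{2n}$ becomes $R^{pn}$ and the weight $R^{2n-2}$ becomes $R^{pn-2}$), while the Basel denominator $R^2$ is left untouched because it arises from the summation over $R$ and not from the $\ell^p$-structure. One should also note that the rearrangement and the final inequality remain valid even when $pn - 2 < 0$ (as happens for $p = 1$, $n = 1$), since in that case one is merely bounding a sum of nonnegative terms from below by its top summand.
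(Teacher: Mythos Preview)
Your proposal is correct and is precisely the argument the paper intends: the paper does not give a separate proof of Lemma~\ref{lem8943e} but simply remarks that the kernel estimates from Section~\ref{seckn234323} carry over to the $\ell^p$-setting, and your write-up is exactly the line-by-line transcription of Lemma~\ref{lemjk231fsd} and Corollary~\ref{cornkjs23} with $2$ replaced by $p$. Your observation that the inequality $d(g,e)^{pn-2} \le \sum_{R=1}^{d(g,e)} R^{pn-2}$ holds regardless of the sign of $pn-2$ (being a single nonnegative summand of the sum) is a nice point that the paper leaves implicit.
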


\begin{rem}\label{remui3489re}
There is a symmetrized version $B_r^{p,\ast} G$, where we complete $\IC G$ simultaneously in the norms $\|-\|_{p,\op}$ and $\|-^\ast\|_{p,\op}$, where $\left(\sum a_g g\right)^\ast = \sum \overline a_g g^{-1}$. Analogously we can define $\Bpol^{p,\ast} G$ and prove that it is a dense and smooth \Frechet $^\ast$-subalgebra of $B_r^{p,\ast} G$.

We have a continuous inclusion $B_r^{p,\ast} G \to B_r^p G$, but except for the abelian case $B_r^{p,\ast} G$ is usually strictly smaller than $B^p_r G$, see Liao--Yu \cite{liao_yu}. In the cited paper it will be also proven that if $G$ has Banach property (RD)$_q$ for $q$ the dual exponent to $p \not= 1$, then the inclusion $B_r^{p,\ast} G \to B_r^p G$ induces isomorphisms on $K$-theory. The usual Property (RD) implies Banach property (RD)$_q$ for all $q \in (1,2)$.

The advantage of $B_r^{p,\ast} G$ over $B_r^p G$ is that due to Banach space interpolation we have a continuous inclusion $B_r^{p,\ast} G \to C_r^\ast G$ and therefore we can try to compare their $K$-theories. Interpolation also gives us continuous inclusions $\Bpol^{p,\ast} G \to \Cpol G$.

Recently Chung \cite{chung} showed the $L_p$-Baum--Connes conjecture with coefficients in $C(X)$ for all $p\in (1,\infty)$, if the group acts with finite dynamic asymptotic dimension on the compact Hausdorff space $X$. This especially implies that the $K$-theory of the corresponding $L_p$-reduced crossed product is independent of $p\in (1,\infty)$.
\qed
\end{rem}

\section{Combinatorics of groups and polynomial contractibility}
\label{secn2339992}

In this section we will firstly discuss the different definitions of higher-order combinatorial functions that we are considering in this paper, and secondly (in Section~\ref{secio23ds}) we will discuss combings of groups, how a polynomially bounded combing produces polynomially contractible groups, and why quasi-geodesically combable groups must have contractible asymptotic cones.

Let $G$ be a countable, discrete group. Recall that $G$ is called being of type $F_N$ if it admits a model for its classifying space $BG$ having a finite $N$-skeleton, and $G$ is called being of type $F_\infty$ if it is of type $F_N$ for every $N \in \IN$.

Note that equivalently we could have said that type $F_N$ means that $G$ admits a model for $BG$ as a simplicial complex with finitely many simplices up to dimension $N$.

Type $F_1$ means finitely generated, and type $F_2$ finitely presented. Being of type $F_\infty$ is a quasi-isometry invariant of groups, see Gromov \cite[Corollary~1.$\text{C}^\prime_2$ on Page~17]{gromov_invariants_infinite}, Alonso \cite{alonso_qi} or Ji--Ramsey \cite[Lemma 2.9]{ji_ramsey}.

\subsection{Higher-order combinatorial functions \`{a} la Ji--Ramsey}

Let us recall the higher-order Dehn functions by Ji--Ramsey \cite{ji_ramsey}.

Let $X$ be a simplicial complex. For a simplicial $N$-boundary $b$ we denote by $l_f(b)$ its filling length, i.e., the least number of $(N+1)$-simplices $a$ with $\partial a = b$. We denote the number of simplices in $b$ by $|b|$. The $N$-th Dehn function $d^N(-)\colon \IN \to \IN \cup \{\infty\}$ of $X$ is now defined as
\[d^N(k) := \sup_{|b| \le k} l_f(b),\]
where the supremum runs over all $N$-boundaries $b$ of $X$ with $|b| \le k$.

For a group $G$ we choose a simplicial model for $BG$. The higher-order Dehn functions of $G$ are then defined as the higher-order Dehn functions of $EG$. If $G$ is of type $F_{N+1}$, then all the higher-order Dehn functions $d^n(-)$ up to $n \le N$ have finite values and the growth type (e.g., being asymptotically a polynomial of a certain degree) does not depend on the chosen model for $BG$ with finite $(N+1)$-skeleton \cite[Section~2]{ji_ramsey}.

\subsection{Higher-order combinatorial functions \`{a} la Riley}

Riley \cite{riley} uses singular combinatorial complexes to define higher-order combinatorial functions. So let us recall the definition:

\begin{defn}[Singular combinatorial complexes]
We will first define combinatorial complexes inductively over the dimension: a $0$-dimensional combinatorial complex is a set with the discrete topology, each point being called both a closed cell and an open cell.

A continuous map $C_1 \to C_2$ is called combinatorial if its restriction to each open cell of $C_1$ is a homeomorphism onto an open cell of $C_2$.

An $N$-dimensional combinatorial complex is a topological space $C$ that can be obtained from a disjoint union $U$ of an $(N-1)$-dimensional combinatorial complex $C^{(N-1)}$ and a collection $(e_\lambda)_{\lambda \in \Lambda}$ of closed $N$-discs in the following way: we suppose the boundaries $\partial e_\lambda$ have combinatorial structures, i.e., for each $e_\lambda$ exists an $(N-1)$-dimensional combinatorial complex $S_\lambda$ with a homeomorphism $\partial e_\lambda \to S_\lambda$. We also suppose that there are combinatorial maps $S_\lambda \to C^{(N-1)}$. Then $C$ is obtained from $U$ by taking the quotient via the attaching maps (equipped with the quotient topology). The open cells of $C$ are defined to be the open cells in $C^{(N-1)}$ and the interiors of the $N$-discs $e_\lambda$. The closed cells of $C$ are the closed cells of the complex $C^{(N-1)}$ together with the $N$-discs $e_\lambda$ equipped with their boundary combinatorial structures $\partial e_\lambda \to S_\lambda$.

To define singular combinatorial complexes we just redefine the maps that are allowed in the inductive definition of combinatorial complexes: a continuous map $C_1 \to C_2$ between singular combinatorial complexes is a singular combinatorial map, if for all $N \in \IN$ each open $N$-cell of $C_1$ is either mapped homeomorphically onto an $N$-cell of $C_2$ or collapses. By the latter we mean that it maps into the image of its boundary.
\qed
\end{defn}

Let $G$ be of type $F_{N+1}$. Then we can construct a compact singular combinatorial $(N+1)$-complex which is the $(N+1)$-skeleton of a model for the classifying space $BG$. We define the $N$th-order two-variable minimal combinatorial isoperimetric function by
\[\delta^{(N)}(n,l) := \sup \{\FVol(\gamma)\colon \gamma \in \Omega_N \text{ with } \Vol(\gamma) \le n \text{ and }\Diam(\gamma) \le l\}\]
and the $N$th-order two-variable minimal combinatorial isodiametric function by
\[\eta^{(N)}(n,l) := \sup \{\FDiam(\gamma)\colon \gamma \in \Omega_N \text{ with } \Vol(\gamma) \le n \text{ and }\Diam(\gamma) \le l\}.\]
Here $\Omega_N$ is the set of singular combinatorial maps $\gamma\colon S^N \to EG^{(N+1)}$, where $S^N$ is given a combinatorial structure (which we do not fix, i.e., for each $\gamma$ we may choose a different combinatorial structure).

The quantity $\Vol(\gamma)$ is the number of $N$-cells in the combinatorial structure of $S^N$ on which $\gamma$ is a homeomorphism, and $\FVol(\gamma)$ is the minimal number amongst the number of $(N+1)$-cells mapped homeomorphically into $EG^{(N+1)}$ of a combinatorial structure of an $(N+1)$-disc $D^{N+1}$ with $\partial D^{N+1} = \gamma$.

Similarly we define the diameter $\Diam(\gamma)$ of $\gamma \in \Omega_N$, resp., its filling diameter $\FDiam(\gamma)$: we endow the $1$-skeleton of $S^N$ (resp., of $D^{N+1}$ for the filling diameter) with a pseudo-metric by defining each edge that is collapsed to a single vertex under $\gamma$ to have length $0$, and length $1$ otherwise.

\begin{example}
Let $N \ge 1$. Riley \cite[Theorem D]{riley} has shown that if $G$ is a finitely generated group whose asymptotic cones\footnote{see the next Section~\ref{secio23ds}} are all $N$-connected, then $G$ is of type $F_{N+1}$ and polynomially contractible up to order $N$.

Recall that the latter means that both the higher-order two-variable isoperimetric and isodiametric functions are polynomially bounded in their variables.

He went on to show \cite[Theorem E]{riley} that virtually nilpotent groups have contractible asymptotic cones, which implies by the above that finitely generated virtually nilpotent groups are of type $F_\infty$ and polynomially contractible.
\qed
\end{example}

Being virtually nilpotent also implies that the higher-order Dehn functions in the sense of Ji--Ramsey are polynomially bounded \cite[Corollary~2.11]{ji_ramsey}. But it is not clear if having contractible asymptotic cones implies that the higher-order Dehn functions of Ji--Ramsey are polynomially bounded (but this would follow if one would show that having polynomially bounded higher-order isoperimetric and isodiametric functions in the sense of Riley is equivalent to having polynomially bounded higher-order Dehn functions in the sense of Ji--Ramsey).

\subsection{Combings of groups and asymptotic cones}
\label{secio23ds}

Let $G$ be a group and choose a generating set for it. We denote by $|\largecdot - \largecdot|$ the distance in the word metric on $G$ derived from this generating set.

A combing of a group $G$ is a mapping
\[\sigma\colon G \to \text{paths in }G, \quad g \mapsto \text{path from }e\text{ to }g\]
(by a path in $G$ we mean a map $\IN_0 \to G$ starting at $e \in G$, becoming eventually constant, and traveling with at most unit speed\footnote{By this we mean that $|\sigma(g)(t) - \sigma(g)(t+1)| \le 1$ for all $t \in \IN_0$.}) with the following property (which is often called ``$k$-fellow-traveling''-property):

There exists $k > 0$ such that for all $g,h \in G$ with $|g-h| \le 1$ and all $t \ge 0$ we have $|\sigma(g)(t) - \sigma(h)(t)| \le k$.

The above type of combing is called a synchronous combing by Gersten \cite{gersten}, and is called a bounded combing by Alonso \cite{alonso}.

\begin{defn}\label{defn023d2d23}
Demanding additional properties on the combing $\sigma$, we arrive at the following notions:
\begin{itemize}
\item In a quasi-geodesically combable group the paths given by the combing $\sigma$ must be quasi-geodesics (for a fixed choice of quasi-geodesicity constants).
\item An automatic group is one which admits a quasi-geodesic combing $\sigma$ that constitutes a regular language.
\item A polynomially combable groups is one where the lengths of the paths\footnote{A length of a path $\sigma(g)$ is the minimal $k \in \IN$, such that $\sigma(g)(t)$ is constant for all $t \ge k$.} $\sigma(g)$ are bounded from above by a polynomial in the length of $g$.
\end{itemize}
This explains half of the notions in Diagram \eqref{eq875562}.
\qed
\end{defn}

\begin{prop}
Let $G$ be a finitely generated group and let $G$ be polynomially combable.

Then $G$ is of type $F_\infty$ and polynomially contractible.
\end{prop}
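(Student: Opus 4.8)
The plan is to exhibit, for each $\gamma \in \Omega_N$, an explicit filling built by ``coning along the combing'', and to read off the polynomial bounds on $\FVol$ and $\FDiam$ directly from the polynomial bound on the combing lengths. First, that $G$ is of type $F_\infty$ is not something I would reprove: a polynomially combable group is in particular combable, so this is Alonso's theorem (\cite{alonso}, Theorem~2). The consequence I actually need is that $EG$ admits a model that is cocompact in each dimension, which I will use in the following uniform form: there is a constant $C = C(k,N)$, independent of $\gamma$, such that every singular combinatorial $N$-sphere in $EG^{(N+1)}$ of combinatorial diameter bounded by a constant depending only on $k$ bounds an $(N+1)$-disc using at most $C$ non-degenerate cells. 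This uniform fillability of bounded configurations is where finiteness of the model (cocompactness) genuinely enters, beyond the mere existence of the complex.

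Next I would normalize $\gamma$. Since $G$ acts on $EG$ by combinatorial automorphisms preserving all four quantities $\Vol$, $\FVol$, $\Diam$, $\FDiam$, I may assume that $\gamma$ hits the base vertex $e$. Because $\gamma$ is a singular combinatorial map, it is $1$-Lipschitz from the collapsed pseudo-metric on the $1$-skeleton of $S^N$ into the graph metric on $EG^{(1)}$, which is quasi-isometric to the word metric on $G$; hence every image vertex $g$ satisfies $|g| \le \Diam(\gamma) = l$ up to a fixed multiplicative constant. Polynomial combability then bounds the lengths of all the relevant combing paths by a single polynomial, $\length(\sigma(g)) \le P(l)$, and I set $T := \max_g \length(\sigma(g)) \le P(l)$.

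The filling itself is the cone $D^{N+1} = \big(S^N \times \{0,1,\dots,T\}\big)/\big(S^N \times \{0\}\big)$, with the map $H$ defined on vertices by $H(x,t) := \sigma(\gamma(x))(t)$, so that the top face recovers $\gamma$ and the whole bottom face collapses to $e$. I would extend $H$ over the higher cells skeleton by skeleton: along each ``prism'' $\tau \times [t,t+1]$ the $k$-fellow-traveling property keeps the images of the vertices of $\tau$ pairwise close at each fixed time, while a single time step moves each vertex by at most one edge, so the already-defined boundary is a singular $N$-sphere of bounded combinatorial diameter, which I fill using at most $C$ cells by the uniform statement above. Counting the resulting non-degenerate $(N+1)$-cells gives $\FVol(\gamma) \le C \cdot \Vol(\gamma) \cdot T \le C\, n\, P(l)$, a polynomial in $(n,l)$, so $\delta^{(N)}$ is polynomially bounded. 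For the isodiametric bound, every vertex of $D^{N+1}$ lies on a vertical combing path back to the apex of pseudo-length at most $T$, whence $\FDiam(\gamma) \le 2T \le 2P(l)$; thus $\eta^{(N)}$ is polynomially bounded and $G$ is polynomially contractible in the sense of Riley.

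The step I expect to be the main obstacle is the honest cell-count for $\FVol$, specifically the control of degeneracies: $\Vol(\gamma)$ counts only the $N$-cells on which $\gamma$ is a homeomorphism, whereas a chosen combinatorial structure on $S^N$ may carry many collapsing $N$-cells, and one must check that the prisms over collapsing cells do not contribute uncontrollably many non-degenerate $(N+1)$-cells to the cone. The second delicate point is the skeleton-by-skeleton extension of $H$, where one must verify that ``bounded diameter in $S^N$'' is genuinely transported by $\gamma$ together with fellow-traveling to ``bounded diameter in $EG$'' in \emph{every} dimension, not merely across single edges, so that the prism boundaries really fall under the uniform (cocompact) filling statement. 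Both are matters of careful bookkeeping rather than new ideas, and they are precisely the technical issues that Riley's singular-combinatorial framework is set up to accommodate.
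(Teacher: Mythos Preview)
Your proposal is correct and follows essentially the same route as the paper's own (very sketchy) proof: both invoke Alonso for $F_\infty$, translate so that the cone vertex lies at bounded distance from $\gamma$, cone along the combing, and use $k$-fellow-traveling together with cocompactness to fill each time-slice prism by a uniformly bounded number of cells. Your write-up is in fact considerably more explicit than the paper's; the only cosmetic imprecision is ``assume $\gamma$ hits $e$''---in general you can only arrange that $\gamma$ comes within the diameter of a fundamental domain of $e$, but this changes nothing in the estimates.
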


\begin{proof}
That $G$ is of type $F_\infty$ follows already from the existence of an ordinary combing (i.e., no need for it to be polynomial). This was proven by Alonso \cite[Theorem~2]{alonso}.

That $G$ must have polynomially bounded higher-order Dehn functions was already noticed by Ji--Ramsey \cite[End of 2nd paragraph on p.~257]{ji_ramsey}.

The idea to show that the higher-order combinatorial functions in the sense of Riley are polynomially bounded is the following: fixing any vertex $v_0$ in $EG^{(N+1)}$ and given any map $\gamma\colon S^N \to EG^{(N+1)}$, we use the combing to produce a contraction of this map onto the chosen vertex $v_0$. Due to the $k$-fellow travelling property and since the combing is polynomially bounded we get a bound on the filling volume and filling diameter of $\gamma$ depending firstly, polynomially on the volume and diameter of $\gamma$, and secondly, polynomially on the distance of $\gamma$ to $v_0$. But since a fundamental domain in $EG^{(N+1)}$ is bounded in diameter, we can always find a translate of $v_0$ by deck transformations such that the distance of this translate to $\gamma$ will be uniformly bounded, which finishes the proof of the sought estimate. Note that since we are working here with a fixed vertex $v_0$ and its translates all the time, it suffices that we are given a polynomial combing, i.e., we do not need a polynomial bi-combing.\footnote{In a bi-combing the map $\sigma$ must have the $k$-fellow-travelling property not only for paths starting at the same point $e \in G$, but also for paths starting at neighbouring elements of $g$. To get from $\sigma$ paths that do not start at $e$ we regard a path that $\sigma$ gives as a string of generators producing this path. Then we may apply this string of generators to any other element of the group. There exist groups which are combable but not bi-combable, see Bridson \cite{bridson_combable}.}
\end{proof}

Let us now define asymptotic cones: fixing a metric space $(X,d)$, a choice of
\begin{itemize}
\item a non-principal ultrafilter $\omega$ on $\IN$,
\item a sequence of basepoints $e = (e_n)_{n \in \IN}$ in $X$, and
\item a sequence of strictly positive scaling factors $s = (s_n)_{n \in \IN}$ with $s_n \to \infty$
\end{itemize}
gives us the asymptotic cone
\[\asCone_\omega(X,e,s) := \big( (a_n)_{n \in \IN} \subset X \colon \omegalim_{n \to \infty} \sfrac{1}{s_n} \cdot d(e_n,a_n) < \infty \big) \big/\!\!\sim.\]
The equivalence relation is $(a_n)\!\sim\! (b_n) \Leftrightarrow \omegalim \sfrac{d(a_n,b_n)}{s_n} = 0$ and we define a metric on the asymptotic cone by $\dist((a_n),(b_n)) := \omegalim \sfrac{d(a_n,b_n)}{s_n}$.

Note that if the space $X$ is a finitely generated group with a word metric, then different choices of basepoint sequences lead to isometric asymptotic cones due to the homogeneity of the metric space (see, e.g., Riley~\cite[Lemma~2.2]{riley} for a proof). But the asymptotic cones do depend in general on the choices of non-principal ultrafilter $\omega$ and sequence of scaling factors $(s_n)$.

It is known that the asymptotic cones of hyperbolic groups are $\IR$-trees and therefore contractible (e.g., Gromov \cite[§2.B.(b)]{gromov_invariants_infinite}), and that $\CAT(0)$ groups and co-compact lattices in $\widetilde{\mathrm{SL}(2,\IR)}$ have asymptotic cones which are $\CAT(0)$ spaces and therefore are also contractible (Kar \cite{aditi_kar}). We will generalize the hyperbolic and the $\CAT(0)$ case in the next proposition.

The next seems to be a folklore theorem, cf.~\cite[Paragraph after Corollary~6.6]{BKMM}. We will sketch a rough proof of it.

\begin{prop}
Let $G$ be a quasi-geodesically combable group.

Then the asymptotic cones of $G$ are contractible.
\end{prop}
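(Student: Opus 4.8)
The plan is to build an explicit contraction of the cone onto the class of the basepoint directly out of the combing $\sigma$, exploiting that the fellow-traveling property survives the rescaling limit. Let $k$ be the fellow-traveling constant and let the combing paths be $(\lambda,\mu)$-quasi-geodesics, so that, recalling unit speed, $\tfrac1\lambda|s-t|-\mu \le |\sigma(g)(s)-\sigma(g)(t)| \le |s-t|$. By homogeneity I may take the basepoint sequence to be constant equal to $e$, so a point of $\asCone_\omega(G,e,s)$ is a class $x=[(a_n)_n]$ with $\omegalim |a_n|/s_n < \infty$. Writing $\ell_n := \length(\sigma(a_n))$, I would define
\[ H(x,u) := \big[\big(\sigma(a_n)(\lfloor u\,\ell_n\rfloor)\big)_n\big], \qquad u\in[0,1]. \]
Since $\sigma(a_n)(0)=e$ and $\sigma(a_n)(\ell_n)=a_n$, this gives $H(x,0)=[(e)_n]$ and $H(x,1)=x$, so it is a candidate contraction; the content is that $H$ is well defined on classes and continuous. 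That $H(x,u)$ again lies in the cone is immediate: unit speed and quasi-geodesicity give $|\sigma(a_n)(\lfloor u\ell_n\rfloor)| \le \ell_n \le \lambda(|a_n|+\mu)$, so the $\omega$-limit after dividing by $s_n$ is finite.

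The two estimates that make everything work both come from combining $k$-fellow-traveling with quasi-geodesicity. First, interpolating a word-geodesic $g=g_0,g_1,\dots,g_m=h$ with $m=|g-h|$ and summing the fellow-traveling inequality over consecutive pairs gives $|\sigma(g)(t)-\sigma(h)(t)| \le k\,|g-h|$ for all $t$. Second, and this is the real point, I claim the path-lengths are Lipschitz in the endpoint: $|\ell_g-\ell_h| \le C_0\,|g-h|$ with $C_0 := \lambda(k+1+\mu)$. For neighbors $g,h$ this holds because at time $\ell_h$ the path $\sigma(g)$ is within $k$ of $h$, hence within $k+1$ of $g$; if $\ell_g>\ell_h$ the point $\sigma(g)(\ell_h)$ still lies on the quasi-geodesic portion, so $\tfrac1\lambda(\ell_g-\ell_h)-\mu \le |\sigma(g)(\ell_h)-g| \le k+1$, forcing $\ell_g-\ell_h \le C_0$; the reverse inequality and the general case follow by symmetry and interpolation.

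With these in hand, well-definedness and continuity reduce to one uniform computation. If $x=[(a_n)]$ and $y=[(c_n)]$, writing $\ell_n,\ell_n^c$ for the two lengths and inserting an intermediate term,
\[ d\big(\sigma(a_n)(\lfloor u\ell_n\rfloor),\,\sigma(c_n)(\lfloor u\ell_n^c\rfloor)\big) \le k\,|a_n-c_n| + \big(u\,|\ell_n-\ell_n^c|+1\big) \le (k+C_0)\,|a_n-c_n|+1, \]
using the first estimate at the common time $\lfloor u\ell_n\rfloor$, unit speed for the second summand, and the second estimate to bound $|\ell_n-\ell_n^c|$. Dividing by $s_n$ and taking the $\omega$-limit yields $\dist(H(x,u),H(y,u)) \le (k+C_0)\,\dist(x,y)$ uniformly in $u$; taking $(c_n)$ a second representative of $x$ shows $H$ is well defined, and in general shows $H$ is Lipschitz in $x$ uniformly in $u$. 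Continuity in $u$ is easier: by unit speed $d(H(x,u),H(x,u')) \le \omegalim (|u-u'|\ell_n+1)/s_n = |u-u'|\cdot\omegalim \ell_n/s_n$, so $H$ is Lipschitz in $u$ with a constant controlled by the size of $x$. Joint continuity follows, $H$ is the desired contraction, and the cone is contractible.

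The hard part is exactly the second estimate. Because the combing paths are only quasi-geodesics, the speed ambiguity means two $\omega$-close endpoints can a priori be reached by combing paths whose lengths differ by a definite fraction of $|a_n|\sim s_n$, which would wreck both well-definedness and continuity of the fraction-of-length homotopy. Showing that $g\mapsto\length(\sigma(g))$ is genuinely Lipschitz in $g$ is what rules this out; for an honest geodesic combing it would be automatic, but for quasi-geodesic combings it is the one place where quasi-geodesicity, and not merely fellow-traveling, is essential. Everything else is the routine verification that rescaled fellow-traveling passes to the limit.
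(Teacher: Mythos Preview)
Your argument is correct and follows the same outline as the paper's rough sketch: pass the combing through the ultralimit to obtain Lipschitz paths in the cone, use fellow-traveling for well-definedness and continuous dependence on the endpoint, and contract along these paths. The one substantive detail you supply that the paper leaves implicit is the Lipschitz bound $|\ell_g-\ell_h|\le C_0\,|g-h|$ on combing lengths, which is precisely what makes your fraction-of-length parametrization well behaved, and your identification of this as the point where quasi-geodesicity (as opposed to mere fellow-traveling) is genuinely needed is correct.
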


\begin{proof}
Quasi-geodesic paths in $G$ give Lipschitz paths in the asymptotic cones (to see this use, e.g., the arguments at the beginning of the proof of \cite[Proposition~2.5]{riley}). We need the ``$k$-fellow-traveling''-property to ensure that the constructed Lipschitz paths in the asymptotic cones are well-defined, i.e., do not depend on the chosen sequence of points in $G$ to represent a point in an asymptotic cone. The ``$k$-fellow-traveling''-property furthermore implies that these Lipschitz paths vary continuously (again with a Lipschitz estimate) depending on their starting point.

Hence we can use these Lipschitz paths in the asymptotic cones to contract them to their respective base points.
\end{proof}

\section{Semi-norms on uniformly finite homology}
\label{sec23rds23rw}

Let us first recall the definition of uniformly finite homology:

\begin{defn}[{\cite[Section 2]{block_weinberger_1}}]
Let $X$ be a metric space.

$C_i^\uf(X)$ denotes the vector space of all infinite formal sums $c = \sum a_{\bar x} \bar x$ with $\bar x \in X^{i+1}$ and $a_{\bar x} \in \IC$ satisfying the following three conditions (constants depending on $c$):
\begin{enumerate}
\item There exists $K > 0$ such that $| a_{\bar x} | \le K$ for all $\bar x \in X^{i+1}$.
\item For all $r > 0$ exists $K_r > 0$ with $\card \{ \bar x \in B_r(\bar y) \ | \ a_{\bar x} \not= 0 \} \le K_r$ for all $\bar y \in X^{i+1}$.
\item There is $R > 0$ such that $a_{\bar x} = 0$ if $d(\bar x, \Delta) > R$; $\Delta$ is the multidiagonal in $X^{i+1}$.
\end{enumerate}

The boundary map $\partial \colon C_i^\uf(X) \to C_{i-1}^\uf(X)$ is defined by
\[\partial (x_0, \ldots, x_i) = \sum_{j=0}^i (-1)^j (x_0, \ldots, \hat x_j, \ldots, x_i)\]
and extended by (infinite) linearity to all of $C_i^\uf(X)$. The resulting homology is called the uniformly finite homology $\Huf_\ast(X)$.

If a group $G$ acts by isometries on the space $X$, then we may define the equivariant uniformly finite homology $\HufG_\ast(X)$ by considering only $G$-equivariant chains.
\qed
\end{defn}

Let $G$ be a finitely generated, discrete group. Choosing a finite generating set, we regard $G$ as a metric space under the induced word length. Different choices of generating sets result in quasi-isometric metrics and therefore the following results are independent of this choice. The author proved \cite[Proposition~3.8]{engel_wrongway} the isomorphism $H_\ast(BG) \cong \HufG_\ast(G)$ by exhibiting geometric maps in both directions and which are inverse to each other.\footnote{The chain complex occuring in the definition of $\HufG_\ast(G)$ is the usual bar complex, i.e., this isomorphism is well-known.} Let us describe in the following one of these maps.

If $G$ is of type $F_{N+1}$, we choose a model for $BG$ with a finite $(N+1)$-skeleton. The isomorphism $H_k(BG^{(N+1)}) \cong \HufG_k(G)$ for all $0 \le k \le N$ is then given by the following map: given a simplicial chain in $BG^{(N+1)}$, we lift it equivariantly to $EG^{(N+1)}$. Then we forget everything from the simplices but their vertices, and finally we map these vertices onto an equivariantly and quasi-isometrically embedded copy of $G$ inside $EG^{(N+1)}$. Note that $G$ and $EG^{(N+1)}$ are in this case (i.e., $G$ being of type $F_{N+1}$) even quasi-isometric since $EG^{(N+1)}$ is $G$-finite.

\begin{defn}\label{defnnorm}
Let $G$ be a group and fix $q \in \IN$. For every $n \in \IN$ we define the following norm of an equivariant uniformly finite chain $c = \sum a_{\bar g} \bar g \in \CufG_q(G)$:
\[\|c\|_{n} := \sum_{\bar g \in G^{q+1},\atop\bar g = (e,\ldots)} |a_{\bar g}| \cdot \diam(\bar g)^n,\]
where $\diam(\bar g) := \max_{0 \le k,l \le q} d(g_k,g_l)$.

We equip $\CufG_q(G)$ with the family of norms $(\|-\|_{n} + \|\partial -\|_{n})_{n \in \IN}$, denote its completion to a \Frechet space by $\CpolG_q(G)$ and the resulting homology by $\HpolG_\ast(G)$.
\qed
\end{defn}

In \cite{engel_rough} the author defined groups $\Hufpol_\ast(Y)$ for uniformly discrete metric spaces~$Y$. But note that in this paper we use polynomially weighted $\ell^1$-norms, whereas in the cited paper weighted $\ell^\infty$-norms are used.

\begin{thm}\label{thm092332}
Let $G$ be of type $F_{N+1}$ and polynomially contractible up to order $N$.

Then the map $H_k(BG) \to \HpolG_k(G)$ is an isomorphism for all $0 \le k \le N$.
\end{thm}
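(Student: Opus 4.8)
The plan is to factor the map as $H_k(BG)\xrightarrow{\cong}\HufG_k(G)\xrightarrow{\iota_\ast}\HpolG_k(G)$, where the first isomorphism is the one established geometrically in \cite{engel_wrongway} and $\iota_\ast$ is induced by the inclusion of the equivariant uniformly finite chains into their completion (Definition~\ref{defnnorm}). Since $\HufG_k(G)\cong H_k(BG;\IC)$ is finite-dimensional for $k\le N$, it suffices to show that $\iota_\ast$ is injective with closed image and that every class in $\HpolG_k(G)$ is a limit of classes coming from $\CufG_\bullet(G)$. The entire argument will rest on one device: a bounded, $G$-equivariant filling operator for the completed complex, built out of polynomial contractibility.

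First I would construct, for each $0\le k\le N$, a linear filling operator $\Phi_k\colon \CpolG_k(G)\to \CpolG_{k+1}(G)$ together with a finite-rank projection $\pi_k$ realizing $H_k(BG)$, so that $\partial\Phi_k+\Phi_{k-1}\partial=\id-\pi_k$ on $\CpolG_k(G)$. Geometrically $\Phi_k$ is a coning/filling inside $EG^{(N+1)}$ (available since $G$ is of type $F_{N+1}$ and $k\le N$): a basis chain $\bar g$ of diameter $d=\diam(\bar g)$ is filled by a $(k+1)$-chain whose number of simplices is controlled by $\delta^{(k)}(\,\cdot\,,d)$ and each of whose simplices has diameter controlled by $\eta^{(k)}(\,\cdot\,,d)$. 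Polynomial contractibility up to order $N$ means precisely that these functions are polynomially bounded, whence
\[\|\Phi_k(\bar g)\|_n\ \le\ \delta^{(k)}(\,\cdot\,,d)\cdot\eta^{(k)}(\,\cdot\,,d)^{\,n}\ \le\ C_{n}\,d^{\,m(n)}\]
for a suitable exponent $m(n)$. Summing over the chain in the diameter-weighted $\ell^1$-norms of Definition~\ref{defnnorm} gives $\|\Phi_k(c)\|_n\le C_n\|c\|_{m(n)}$, so $\Phi_k$ is continuous for the \Frechet topologies and extends from $\CufG$ to the completion. The Ji--Ramsey Dehn functions $d^k$ feed the same estimate through their one-variable filling bounds.

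With $(\Phi_\bullet,\pi_\bullet)$ in hand both halves follow formally. For a cycle $z$ the homotopy gives $z-\pi_k z=\partial\Phi_k z$, so every class of $\HpolG_k(G)$ is represented by an element of the finite-dimensional space $\image\pi_k$; this yields surjectivity of $\iota_\ast$, and the same homotopy exhibits the boundaries as a closed subspace, so that $\HpolG_k(G)$ is Hausdorff. Injectivity of $\iota_\ast$ is then the assertion that a uniformly finite cycle $c$ with $\iota(c)=\partial b$, $b\in\CpolG_{k+1}(G)$, already bounds uniformly finitely: approximating $b$ by uniformly finite $b_j\to b$ produces uniformly finite cycles $c-\partial b_j\to 0$ lying in the same $\HufG_k$-class as $c$, and closedness of the boundaries (equivalently, the bounded-below estimate for $\iota_\ast$ on the finite-dimensional source) forces $[c]=0$. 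Surjectivity may also be seen directly by truncating a cycle $z$ to its diameter-$\le T$ part $z_T$ and filling the small cycle $\partial z_T\to 0$ by $\Phi$, producing uniformly finite cycles $z_T-\Phi_{k-1}(\partial z_T)\to z$ whose classes lie in the now closed image.

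I expect the main obstacle to be the construction of $\Phi_\bullet$ itself: one must fill equivariantly and coherently (linearly in the chain), keep the construction compatible with the chosen simplicial model of $BG^{(N+1)}$ so that the defect is exactly the finite-rank $\pi_\bullet$ representing $H_k(BG)$, and---most delicately---convert the geometric bounds on $\FVol$ and $\FDiam$ into the single estimate $\|\Phi_k(\bar g)\|_n\le C_n d^{\,m(n)}$ simultaneously for all $n\in\IN$ and all degrees $k\le N$. This is the point at which polynomial (as opposed to, say, exponential) contractibility is indispensable, since only polynomial filling bounds survive the passage through the diameter-weighted norms $\|\cdot\|_n$; reconciling Riley's two-variable functions with the Ji--Ramsey Dehn functions, as discussed in Section~\ref{secn2339992}, lets either hypothesis supply the same estimate.
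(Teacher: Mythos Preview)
Your strategy is correct and runs on the same engine as the paper's proof: the polynomial bounds on the higher-order filling functions guarantee that filling a basis simplex $\bar g$ inside $EG^{(N+1)}$ produces a chain whose volume and diameter are polynomial in $\diam(\bar g)$, so the filling is continuous for the norms of Definition~\ref{defnnorm}. The paper packages this a bit differently, and more concretely: instead of an abstract filling operator $\Phi_\bullet$ with an unspecified finite-rank defect~$\pi_\bullet$, it builds for each $k\le N+1$ a continuous chain map $\Delta_k\colon \CpolG_k(G)\to C_k(K)$ into the simplicial chains of the \emph{compact} complex $K=BG^{(N+1)}$, by sending $\bar g$ to (the push-down of) a filled simplicial map $\Delta_{\bar g}\colon \Delta^k\to EG^{(N+1)}$. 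Since $C_\ast(K)$ is finite-dimensional this supplies your $\pi_k$ for free as the composite with the comparison map $C_\ast(K)\to\CpolG_\ast(G)$, and two continuous chain homotopies (for $\Delta\circ f\simeq\id_{C_\ast(K)}$ and $f\circ\Delta\simeq\id_{\CpolG_\ast}$) replace your single identity $\id-\pi=\partial\Phi+\Phi\partial$. This direct chain-homotopy-equivalence argument also sidesteps your injectivity paragraph, whose appeal to ``closedness of the boundaries'' is not quite enough as written: that $c-\partial b_j\to 0$ in $\CpolG_k$ does not by itself control the class in $\HufG_k$.

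One point to sharpen: the bound on the filling of a single basis $k$-simplex $\bar g$ is not governed by $\delta^{(k)}$ alone, because $\bar g$ is a simplex, not a $k$-sphere. The paper builds $\Delta_{\bar g}$ by induction over skeleta---connect the vertices by edge-paths of length at most $d=\diam(\bar g)$, then fill the resulting $1$-cycles using $\delta^{(1)},\eta^{(1)}$, then fill the resulting combinatorial $2$-spheres using $\delta^{(2)},\eta^{(2)}$ applied to the outputs of the previous step, and so on---so the final bound is an iterated composition of all the $\delta^{(j)},\eta^{(j)}$ for $j<k$. It is polynomiality of \emph{all} of these up to order $N$ that makes the composite polynomial in $d$; a single $\delta^{(k)}$ would not suffice.
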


\begin{proof}
Since $H_k(BG^{(N+1)}) \cong H_k(BG)$ for all $0 \le k \le N$, we will work in this proof with the complex $K = BG^{(N+1)}$.

Equipping the chain complex $C_\ast(K)$ with the sup-norm, the previously described map $C_\ast(K) \to \CufG_\ast(G)$ inducing the isomorphism $H_\ast(BG) \cong \HufG_\ast(G)$ becomes continuous. Our first step is to construct for $k \le N+1$ continuous chain maps $\Delta_k\colon \CufG_k(G) \to C_k(K)$ which will induce for all $k \le N$ the inverse maps to the maps $H_k(K) \to \HufG_k(G)$.

We fix an equivariant quasi-isometry $X \simeq_{\mathrm{QI}} G$, where $X$ is the universal cover of $K$. Fix $q \le N+1$. Let $\bar g \in G^{q+1}$ with $\bar g = (e,\ldots)$ be given and regard it as a tuple of points in the complex $X$. Let us construct in the following a simplicial map $\Delta_{\bar g}\colon \Delta^q \to X$, where the $q$-simplex $\Delta^q$ consists of at most $P(\diam(\bar g))$ simplices (for some polynomial $P(-)$, which is independent of $\bar g$) and the vertices of the image of $\Delta_{\bar g}$ are exactly $\bar g$: since $K$ is $N$-connected, we can firstly connect the vertices of $\bar g$ to each other, secondly fill the loops that we see by discs, thirdly fill the combinatorial $2$-spheres that we see by balls, etc., up to the point where we have constructed the whole map $\Delta_{\bar g}\colon \Delta^q \to X$; note that we are constructing this map by induction on the skeleta of $\Delta^q$.

Let us now estimate the number of simplices that the domain of this map $\Delta_{\bar g}$ has by using the higher-order combinatorial functions (we will first discuss the more complicated version of these functions as defined by Riley): after the first step, i.e., after having connected the vertices of $\bar g$ to each other, the result is contained in a ball of radius at most $\diam(\bar g)$ and has at most $\binom{q+1}{2} \cdot \diam(\bar g)$-edges. In the second step we are filling the loops by discs, and therefore the number of non-degenerate discs we have at the end is bounded from above by

\[\tbinom{q+1}{3} \cdot \delta^{(1)}\big( 3 \cdot \diam(\bar g), \diam(\bar g) \big)\]
and the whole result is contained in a ball of radius at most
\[\eta^{(1)}\big( 3 \cdot \diam(\bar g), \diam(\bar g) \big) + \diam(\bar g).\]
After the third step, the number of non-degenerate balls we have is bounded by
\[\tbinom{q+1}{4} \cdot \delta^{(2)}\Big( 4 \cdot \delta^{(1)} \big( 3 \diam(\bar g), \diam(\bar g) \big), \eta^{(1)}\big( 3 \cdot \diam(\bar g), \diam(\bar g) \big) + \diam(\bar g) \Big)\]
and the result is contained in a ball of radius at most
\[\eta^{(2)}\Big(4 \cdot \delta^{(1)}\big( 3 \cdot \diam(\bar g), \diam(\bar g) \big), \eta^{(1)}\big( 3 \cdot \diam(\bar g), \diam(\bar g) \big) + \diam(\bar g) \Big) + \diam(\bar g).\]
The concrete number of non-degenerate simplices that we get at the end is not important to us. The important part is the following: at the $k$th step we get estimates which arise by plugging the estimates from the $(k-1)$st step (slighly modified) into the functions $\delta^{(k)}(-,-)$ and $\eta^{(k)}(-,-)$. Since we assume that all these functions are polynomially bounded and since we start in the first step with just plugging in $\diam(\bar g)$ into these functions, we conclude that the number of simplices that the domain of the map $\Delta_{\bar g}$ has is bounded from above by a polynomial in $\diam(\bar g)$. The same argumentation also works for the higher-order Dehn functions as defined by Ji--Ramsey (the argument is even easier since we do not have to keep track of the diameters).

The above procedure gives us the claimed map $\Delta_k\colon \CufG_k(G) \to C_k(K)$: for a given chain from $\CufG_k(G)$ we construct these simplicial maps $\Delta_{\bar g}$ into $X$ (where $\bar g$ runs over the simplices from the given chain), and we can now push them down to $K$ to get an element of $C_k(K)$. We can do this up to $k = N+1$, and they will be chain maps since we used the inductive procedure to construct them. Because of our above arguments on the number of simplices in the maps $\Delta_{\bar g}$ we see that these maps $\Delta_k$ will be continuous if we use on $\CufG_k(G)$ a norm $\|-\|_n$ with $n \in \IN$ big enough. Therefore we can extend $\Delta_k$ continuously to a map $\CpolG_k(G) \to C_k(K)$ for all $k \le N+1$.

The composition $C_k(K) \to \CpolG_k(G) \xrightarrow{\Delta_k} C_k(K)$ is almost the identity map: the error occurs only because we use the equivariant quasi-isometry $X \simeq_{\mathrm{QI}} G$ in between. But it is chain homotopic to the identity on $C_k(K)$.

The other composition $\CpolG_k(G) \xrightarrow{\Delta_k} C_k(K) \to \CpolG_k(G)$ does the following: chains in $\CpolG_k(G)$ can contain big simplices (i.e., with a big diameter), but after applying this composition we get a chain which consists only of simplices of edge length $1$. If we consider this map on the domain (and with target) $\CufG_k(G)$, then the composition is chain homotopic to the identity. A similar counting argument as above gives that the chain homotopy is continuous and therefore extends to $\CpolG_k(G)$ to show that the composition in question is chain homotopic to the identity on $\CpolG_k(G)$.

Since chain homotopies map up by one degree, this is the reason why at the end we only get the isomorphism $H_k(K) \to \HufG_k(G)$ up to $k \le N$.
\end{proof}

\begin{cor}\label{corj9809uk23ds}
Let $G$ be of type $F_\infty$ and polynomially contractible.

Then the map $\HufG_\ast(G) \to \HpolG_\ast(G)$ is an isomorphism.
\end{cor}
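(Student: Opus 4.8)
The plan is to obtain the corollary from Theorem~\ref{thm092332} by a two-out-of-three argument, so almost no new work is needed. First I would note that a group of type $F_\infty$ which is polynomially contractible is, for every $N \in \IN$, of type $F_{N+1}$ and polynomially contractible up to order $N$; this is immediate from the definitions recalled in Section~\ref{secn2339992}. Hence Theorem~\ref{thm092332} applies for each $N$, and since an arbitrary fixed degree $k$ satisfies $k \le N$ for all sufficiently large $N$, it follows that the map $H_k(BG) \to \HpolG_k(G)$ is an isomorphism in every degree $k \in \IN$.

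The second step is to identify this map with the completion map of the corollary. By its construction in the proof of Theorem~\ref{thm092332}, the map $H_k(BG) \to \HpolG_k(G)$ is induced on chains by the composition $C_\ast(K) \to \CufG_\ast(G) \to \CpolG_\ast(G)$, where the first arrow is the chain map of Section~\ref{sec23rds23rw} realizing the isomorphism $H_\ast(BG) \cong \HufG_\ast(G)$ of \cite[Proposition~3.8]{engel_wrongway}, and the second arrow is the completion of $\CufG_\ast(G)$ with respect to the norms of Definition~\ref{defnnorm}. On homology this factors as
\[H_k(BG) \xrightarrow{\ \cong\ } \HufG_k(G) \longrightarrow \HpolG_k(G),\]
whose second factor is exactly the map appearing in the statement of the corollary. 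Here I would also record that this completion map is well-defined on homology: condition~(3) in the definition of uniformly finite chains forces $\diam(\bar g) \le 2R$ for every tuple $\bar g$ with nonzero coefficient, so each norm $\|-\|_n$ is finite on $\CufG_\ast(G)$ and the inclusion into the completion is a genuine chain map.

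Finally, in the displayed factorization the composite is an isomorphism by the first step, and the left-hand arrow is an isomorphism by \cite{engel_wrongway}; hence the right-hand arrow $\HufG_k(G) \to \HpolG_k(G)$ is an isomorphism for every $k$, which is precisely the assertion of the corollary.

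The argument is essentially formal, so I do not anticipate a genuine obstacle. The one point meriting care is confirming that the map produced by Theorem~\ref{thm092332} really factors through $\HufG_\ast(G)$ with the completion map as its second factor, rather than through some other identification; but this is read off directly from the chain-level definition $C_\ast(K) \to \CufG_\ast(G) \to \CpolG_\ast(G)$ employed in that proof.
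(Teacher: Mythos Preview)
Your proposal is correct and follows exactly the paper's own argument: apply Theorem~\ref{thm092332} in every degree to get that $H_\ast(BG) \to \HpolG_\ast(G)$ is an isomorphism, then combine with the known isomorphism $H_\ast(BG) \cong \HufG_\ast(G)$ via a two-out-of-three argument. The paper states this in two sentences; you have simply spelled out the factorization and the well-definedness of the completion map in more detail.
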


\begin{proof}
From Theorem~\ref{thm092332} it follows that $H_\ast(BG) \to \HpolG_\ast(G)$ is an isomorphism. Now we just combine this with the isomorphism $H_\ast(BG) \cong \HufG_\ast(G)$.
\end{proof}

\section{Character maps for group rings and the main diagram}
\label{seco23324545}

Let us explain the maps and notation occuring in the main diagram \eqref{eqnjk23d} of the introduction. For convenience, we have reproduced the diagram here:
\[\xymatrix{
RK_\ast(BG) \ar[rr]^{\mu^\alg} \ar[dd]_{\ch_n} & & K^\alg_\ast(\calS G) \ar[r] \ar[d]_{\trace \circ \ch_n^{\calS G}} & K_\ast(B_r^p G) \ar[d]\\
& & \HC_n(\IC G) \ar[r] \ar[d]_{\chi_n} & \HCocont_n(\Bpol^p G) \ar@{-->}[d]\\
H_{n}(BG) \ar[rr] & & \HufG_{n}(G) \ar[r] & \HpolG_{n}(G)}\]

The map $\mu^\alg$ is the so-called algebraic Baum--Connes assembly map. If we denote by $\calS = \bigcup_{p \ge 1} \calS^p(H)$ the Schatten class operators on some fixed, separable, $\infty$-dimensional Hilbert space $H$, then $\mu^\alg$ is defined as the Farrell--Jones assembly map for the ring $\calS$. It was first investigated by Yu \cite{yu_algebraic_novikov}, who showed that it is always rationally injective. See the discussion in \cite[Section 2]{EM_burghelea} for more information about this. The algebraic Baum--Connes assembly map factors the usual Baum--Connes assembly map.

The map $\ch_n\colon RK_\ast(BG) \to H_{n}(BG)$ is the usual homological Chern character in degree $n \in \IN_0$. For the homology groups we use complex coefficients (since the map $\chi_n$ will map into complex coefficients).

The map $H_{n}(BG) \to \HufG_{n}(G)$ is the one explained at the beginning of Section~\ref{sec23rds23rw}. By the author's result \cite[Proposition~3.8]{engel_wrongway} it is always an isomorphism. Since the Chern character is known to be rationally an isomorphism (if we map into the direct sum of all degrees $n \in \IN_0$ at once), the Banach strong Novikov conjecture (for classes of degree $n$) therefore follows (together with constructing the dotted arrow in the diagram) from showing that the map $\HufG_{n}(G) \to \HpolG_{n}(G)$, which is induced from completing the corresponding chain complex, is injective. In Corollary~\ref{corj9809uk23ds} we even managed to show that $\HufG_{n}(G) \to \HpolG_{n}(G)$ is bijective for polynomially contractible groups.

Let us discuss now the map $K^\alg_\ast(\calS G) \to \HC_n(\IC G)$. From results of Corti\~{n}as--Thom \cite[Theorems 6.5.3 \& 8.2.5]{CT} we conclude that we have $K^\alg_\ast(\calS G) \cong \KH_\ast(\calS G)$ and these groups are $2$-periodic (and the periodicity is induced by multiplication with the Bott element). Now since $\calS G$ is the directed limit of $\cdots \hookrightarrow \calS^p G \hookrightarrow \calS^{p+1} G \hookrightarrow \cdots$, since $\KH$-theory commutes with directed limits, and since from the proof of Corti\~{n}as--Tartaglia \cite[Corollary 3.5]{ctartaglia} we infer that we have isomorphisms $\KH_\ast(\calS^p G) \to \KH_\ast(\calS^{p+1} G)$ for $p \ge 1$, we conclude $\KH_\ast(\calS G) \cong \KH_\ast(\calS^1 G)$. We can use now the Connes--Karoubi character $\KH_\ast(\calS^1 G) \to \HP_\ast^\mathrm{cont}(\calS^1 G)$, and the whole composition of all the above together with the map $\HP_\ast^\mathrm{cont}(\calS^1 G) \to \HC_n^\mathrm{cont}(\calS^1 G)$ is denoted by $\ch_n^{\calS G}$ in the diagram. Then we can use the trace to get to $\HC_n(\IC G)$.

Since $\Bpol^p G \subset B_r^p G$ is a smooth and dense sub-algebra, we have $K_\ast(\Bpol^p G) \cong K_\ast(B_r^p G)$. The Connes--Karoubi character gives us the morphism $K_\ast(\Bpol^p G) \to \HCocont_n(\Bpol^p G)$. The map $\HC_n(\IC G) \to \HCocont_n(\Bpol^p G)$ is induced from the inclusion $\IC G \to \Bpol^p G$, and the morphism $K^\alg_\ast(\calS G) \to K_\ast(B_r^p G)$ is induced from passing to the completion of algebras $\calS G \cong \calS \otimes_\alg \IC G \to \IK(H) \otimes B_r^p G$ and since $\IK(H) \otimes -$ can be ignored in top.\ $K$-theory.

\subsection{The character map \texorpdfstring{$\chi$}{X} and its continuity}
\label{secji34red}

Let us define the map $\chi_n\colon \HC_n(\IC G) \to \HufG_{n}(G)$. Let therefore $A_0 \otimes \cdots \otimes A_n \in \IC G^{\, \otimes(n+1)}$ be given and we set $\chi_n (A_0 \otimes \cdots \otimes A_n) \in \CufG_n(G)$ as
\[\chi_n(A_0 \otimes \cdots \otimes A_n)(g_0, \ldots, g_n) := \frac{1}{(n+1)!} \sum_{\sigma \in \mathfrak{S}_{n+1}} (-1)^\sigma A_0\big(g_{\sigma(n)}^{-1} g_{\sigma(0)}\big) \cdots A_n\big(g_{\sigma(n-1)}^{-1} g_{\sigma(n)}\big).\]
It is straight-forward to verify that the above formula descends to a chain map $\chi_\bullet$ on the cyclic complex $C_\bullet^\lambda(\IC G)$ and so induces a map $\chi_n\colon \HC_n(\IC G) \to \HufG_n(G)$.

In the following proposition we will investigate the continuity of the maps $\chi_\ast$. Then we may define the map $\HCocont_n(\Bpol^p G) \to \HpolG_{n}(G)$ by continuous extension. Note that in order for $\chi_n$ to descend to $\HCocont_n(\Bpol^p G)$ we must have that $\chi_{n+1}$ is also continuous (to guarantee compatibility with the boundary operator in the chain complexes).

\begin{prop}\label{prop3490we}
The map $\chi_n\colon \IC G^{\, \otimes(n+1)} \to \CufG_n(G)$ is continuous against the topology induced from $\Bpol^p G$ for every $p \le \sfrac{n+1}{n}$.

Let $G$ have Property (RD). Then for every $n \in \IN_0$ the map $\chi_n$ is continuous against the topology coming from $\Cpol G$.
\end{prop}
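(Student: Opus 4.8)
The plan is to bound, for each fixed weight exponent $m \in \IN_0$, the seminorm $\|\chi_n(A_0 \otimes \cdots \otimes A_n)\|_m$ on $\CufG_n(G)$ by a product $C_{n,m} \prod_{j=0}^n \|A_j\|$ of suitable seminorms of the factors — the seminorms defining $\Bpol^p G$ in the first assertion, and those defining $\Cpol G$ in the second. First I would carry out a reduction common to both cases. Writing out $\|-\|_m$ on the equivariant chain $\chi_n(A_0 \otimes \cdots \otimes A_n)$ (so summing over $\bar g = (e,g_1,\ldots,g_n)$), using the triangle inequality to pull the sum over $\mathfrak{S}_{n+1}$ and the prefactor $\tfrac{1}{(n+1)!}$ outside, and noting that relabelling the summation indices turns the summand belonging to each $\sigma$ into the same expression up to relabelling the factors $A_0,\ldots,A_n$, it suffices to estimate, for the identity permutation,
\[ S := \sum_{g_1, \ldots, g_n \in G} \diam(e,g_1,\ldots,g_n)^m \prod_{j=0}^n |A_j(h_j)|, \]
where $h_1 = g_1$, $h_i = g_{i-1}^{-1} g_i$ for $2 \le i \le n$ and $h_0 = (h_1 \cdots h_n)^{-1}$; the map $(g_1,\ldots,g_n) \mapsto (h_1,\ldots,h_n)$ is a bijection of $G^n$. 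Since $e,g_1,\ldots,g_n$ lie on the cyclic edge-path with steps $h_0,\ldots,h_n$, one has $\diam(e,g_1,\ldots,g_n) \le \sum_{i=0}^n |h_i|$, hence $\diam^m \le (n+1)^{m-1}\sum_{i=0}^n |h_i|^m$ by convexity. Substituting this, $S$ is dominated by a sum of $n+1$ terms, the $i$-th being the pairing of $k \mapsto |A_0(k^{-1})|$ against the $n$-fold convolution $|A_1| * \cdots * |A_n|$ of the coefficient functions, carrying one extra polynomial weight $|{-}|^m$ on the $i$-th factor (for $i=0$ the weight sits on $A_0$).

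For the first assertion I would estimate each such term by the Young and Hölder inequalities. By iterated Young on the unimodular group $G$, the $n$-fold convolution of functions in $\ell^p G$ lies in $\ell^r G$ with $\tfrac{1}{r} = \tfrac{n}{p} - (n-1)$; the crucial point is that for $p = \tfrac{n+1}{n}$ this gives exactly $r = n+1$, whose conjugate exponent is again $\tfrac{n+1}{n}=p$. Thus Hölder pairs the convolution (in $\ell^{n+1}$) against $A_0$ (in $\ell^p$) and yields a bound by $\prod_{j=0}^n \|A_j\|_{\ell^p}$, where on the weighted factor one uses the weighted norm $\big(\sum_g |g|^{mp}|a_g|^p\big)^{1/p}$ in place of $\|A_j\|_{\ell^p}$. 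The unweighted $\ell^p$-norms are controlled by operator norms via $\|A\|_{\ell^p} = \|A\delta_e\|_{\ell^p} \le \|A\|_{p,\op}$, and each weighted $\ell^p$-norm is controlled by $\|A\|_{p,\mu,n'}$ for $n'$ with $pn'-2 \ge mp$, by Lemma~\ref{lem8943e}. This proves continuity against $\Bpol^p G$ for $p = \tfrac{n+1}{n}$; for smaller $p$ the same estimate holds since $\ell^p G \hookrightarrow \ell^{(n+1)/n} G$ is norm-nonincreasing on a discrete group, so all $p$-norms only get larger.

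For the second assertion Young's inequality is no longer available, since for $n \ge 2$ one has $2 > \tfrac{n+1}{n}$; this is exactly where Property (RD) enters. Here I would split off $A_0$ by Cauchy--Schwarz and bound the $\ell^2$-norm of the (weighted) $n$-fold convolution by peeling off left/right convolution operator norms of all but the weighted factor, i.e.\ $\bigl\| |A_1| * \cdots * \widehat{|A_i|} * \cdots * |A_n| \bigr\|_2 \le \bigl\|\widehat{|A_i|}\bigr\|_2 \prod_{j \ne i} \bigl\| |A_j| \bigr\|_{\op}$. The operator norms appearing are those of convolution by the modulus functions $|A_j|$, and these are precisely what Property (RD) controls: by definition of (RD) one has $\bigl\||A_j|\bigr\|_{\op} \le C\bigl\||A_j|\bigr\|_{H^s} = C\|A_j\|_{H^s}$ for a fixed $s$ (the bound being insensitive to the left/right convention and to inversion, since $|g^{-1}|=|g|$), and the Sobolev norm $\|A_j\|_{H^s}$ is bounded by $\|A_j\|_{\mu,s+1}$ through Corollary~\ref{cornkjs23}. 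The two remaining $\ell^2$-factors, one of them carrying the weight $|{-}|^m$, are bounded by $\|A_0\|_{\mu,1}$ and $\|A_i\|_{\mu,m+1}$ respectively, again by Corollary~\ref{cornkjs23}. Summing the $n+1$ terms gives the claimed bound by a product of $\Cpol G$-seminorms.

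The main obstacle is the matching of exponents forcing the restriction $p \le \tfrac{n+1}{n}$: with $n+1$ factors but only $n$ free summation variables, a single application of Young and Hölder just barely closes when $p = \tfrac{n+1}{n}$, and no polynomial weight can rescue larger $p$ for groups of exponential growth, where weighted $\ell^p$ does not embed into $\ell^1$. It is precisely to bypass this exponent obstruction that the second statement trades Young's inequality for the convolution-algebra property encoded in Property (RD), at the price of having to control operator norms of the modulus functions $|A_j|$ rather than of the $A_j$ themselves.
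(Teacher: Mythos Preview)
Your proposal is correct and follows essentially the same route as the paper: you reduce the weighted $\ell^1$-seminorm to a convolution at the identity, apply iterated Young's inequality with exponent $p' = \tfrac{n+1}{n}$ together with Lemma~\ref{lem8943e} for the first claim, and in the RD case replace Young by peeling off operator norms of the modulus functions via $\Cpol G \hookrightarrow H^\infty G \hookrightarrow C_r^\ast G$ and Corollary~\ref{cornkjs23}. The only cosmetic difference is that you first relabel to the identity permutation (using left-invariance of the summand and of $\diam$) and then bound $\diam \le \sum_i |h_i|$, whereas the paper keeps a general $\sigma$ and needs a second triangle inequality to pass from $d(g_i,g_{i+1})$ to $\sigma$-consecutive differences; your order is slightly cleaner but leads to the same estimate.
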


\begin{proof}
Recall Definition~\ref{defnnorm} of the norms we are using on uniformly finite homology. We fix $k \in \IN$ and first note that $\diam(\bar g)^k \le C_{k,n} \cdot \big( d(e,g_1)^k + d(g_1, g_2)^k + \cdots + d(g_n,e)^k \big)$, where the constant $C_{k,n}$ only depends on $k,n \in \IN$.

So we have to estimate $\sum_{\bar g \in G^{n+1}\atop\bar g = (e,\ldots)} \big|A_0\big(g_{\sigma(n)}^{-1} g_{\sigma(0)}\big) \cdots A_n\big(g_{\sigma(n-1)}^{-1} g_{\sigma(n)}\big)\big| \cdot d(g_i,g_{i+1})^k$ for all $0 \le i \le n-1$ to finish this proof (we write $g_0 = e$ and we have fixed a permutation $\sigma$). We have $d(g_i,g_{i+1}) = d(g_{\sigma(s)}, g_{\sigma(t)})$ for certain $s$ and $t$, and we use now the triangle inequality to write $d(g_{\sigma(s)}, g_{\sigma(t)}) \le d(g_{\sigma(s)}, g_{\sigma(s+1)}) + d(g_{\sigma(s+1)}, g_{\sigma(s+2)}) + \cdots + d(g_{\sigma(t-1)}, g_{\sigma(t)}).$ So we must find an estimate for $\sum_{\bar g \in G^{n+1}\atop\bar g = (e,\ldots)} \big|A_0\big(g_{\sigma(n)}^{-1} g_{\sigma(0)}\big) \cdots A_n\big(g_{\sigma(n-1)}^{-1} g_{\sigma(n)}\big)\big| \cdot d(g_{\sigma(s)}, g_{\sigma(s+1)})^k$.

Noting that $d(g_{\sigma(s)}, g_{\sigma(s+1)}) = d\big(g^{-1}_{\sigma(s+1)} g_{\sigma(s)},e\big)$ we can rewrite the expression we have to estimate as the iterated convolution $\big( |A_{\sigma^{-1}(0)}| \ast \cdots \ast |A_s| \cdot d(-,e)^k \ast \cdots \ast |A_{\sigma^{-1}(n)}| \big) (e)$, where $|A_i| = \sum |a_g| g$ if $A_i = \sum a_g g$.

We can estimate the value of this iterated convolution at $e \in G$ by its sup-norm, and this in turn can be estimated by an iterated application of Young's inequality\footnote{Young's inequality does hold true on locally compact (unimodular) groups. A proof may be found in, e.g., Hewitt--Ross \cite[Theorem 20.18 on Page 296]{hewitt_ross}. The cited reference does not include the extremal case of the sup-norm on the left-hand side of the inequality, but it is also true.}:
\[\big\| |A_{\sigma^{-1}(0)}| \ast \cdots \ast |A_s| \cdot d(-,e)^k \ast \cdots \ast |A_{\sigma^{-1}(n)}| \big\|_\infty \le \prod_{i = 0, \ldots, n\atop i \not= s} \big\| |A_i| \big\|_{p^\prime} \cdot \big\| |A_s| \cdot d(-,e)^k \big\|_{p^\prime},\]
where $p^\prime = \sfrac{n+1}{n}$. Therefore, if the operators $A_0, \ldots, A_n$ are all from $\Bpol^p G$ for a $p \le p^\prime$ we can finish the proof of the first claim of the proposition by applying Lemma~\ref{lem8943e}.

Let $G$ now have Property (RD). By definition this means that we have a continuous inclusion of the space $H^\infty G$ of rapidly decreasing functions into $C_r^\ast G$. By Corollary~\ref{cornkjs23} we have a continuous inclusion of $\Cpol G$ into $H^\infty G$. Now we also note that on $H^\infty G$ the operation of taking the absolute value of the coefficients of a function is an isometry (but note that it is not a linear operation). So we have a chain of continuous maps
\begin{equation}\label{eqimnb23e}
\Cpol G \to H^\infty G \xrightarrow{|-|} H^\infty G \to C_r^\ast G.
\end{equation}

We want to estimate $\big( |A_{\sigma^{-1}(0)}| \ast \cdots \ast |A_s| \cdot d(-,e)^k \ast \cdots \ast |A_{\sigma^{-1}(n)}| \big) (e)$. This is at most the $\ell^2$-norm of this iterated convolution, and this can be now estimated by
\[\prod_{i = 0, \ldots, n\atop i \not= s,\sigma^{-1}(n)} \big\| |A_i| \|_\op \cdot \big\| |A_s| \cdot d(-,e)^k \big\|_\op \cdot \big\| |A_{\sigma^{-1}(n)}| \big\|_2.\]
By \eqref{eqimnb23e} the operator norm of $|A_i|$ is bounded by certain $\Cpol G$-norms of $A_i$, and the same holds for the operator norm of $|A_s| \cdot d(-,e)^k$, and for the $\ell^2$-norm of $|A_{\sigma^{-1}(n)}|$. This finishes the second claim of the proposition.
\end{proof}

\bibliography{./Banach_SNC_pol_contr}
\bibliographystyle{amsalpha}

\end{document}